\documentclass[12pt]{amsart}

%% packages
%% pdfsync
%\usepackage{pdfsync}

%% fonts
\usepackage{amsmath,amsthm,amssymb,amsfonts,stmaryrd}

%% graphics
\usepackage{graphicx}
\usepackage{epstopdf}
\DeclareGraphicsRule{.tif}{png}{.png}{`convert #1 `dirname #1`/`basename #1 .tif`.png}
\input xy
\xyoption{all}

%% geometry
\usepackage{fullpage}
%\usepackage[left=1.5in,right=1.5in]{geometry}
%\geometry{a4paper}                   	
%\geometry{letter}
%\geometry{landscape}                	% Activate for for rotated page geometry

%% hyperlink					% has to be the last package
\usepackage{hyperref}

%% enumerate style
%\renewcommand{\theenumi}{\arabic{enumi}}
%\renewcommand{\theenumi}{\alph{enumi}}

%% theorems
%\swapnumbers				% numbers first, descriptions last

\theoremstyle{plain}
\newtheorem{thm}{Theorem}[section]
\newtheorem{prp}[thm]{Proposition}
\newtheorem{lem}[thm]{Lemma}

\theoremstyle{definition}
\newtheorem{dfn}[thm]{Definition}

\theoremstyle{remark}
\newtheorem{rmk}[thm]{Remark}

%% maps
\newcommand{\Hom}{\mathrm{Hom}}

\newcommand{\Aut}{\mathrm{Aut}}

\newcommand{\Out}{\mathrm{Out}}

\newcommand{\id}{\mathrm{id}}

\newcommand{\inj}{\hookrightarrow}

%% modules

%% categories

\newcommand{\op}{\mathrm{op}}

 	% simplicial sets
 		% simplicial complexes
 		% barycentric subdivision

%% vector spaces, fields

%% groups
\newcommand{\GL}{\mathrm{GL}}

\newcommand{\SL}{\mathrm{SL}}

\newcommand{\Sym}{\mathrm{Sym}}

%% block theory

%% abbreviations
\newcommand{\la}{\langle}
\newcommand{\ra}{\rangle}

\newcommand{\wt}{\widetilde}

\newcommand{\ol}{\overline}

%% letters(mathcal)

\newcommand{\CF}{\mathcal{F}}

\newcommand{\BF}{\mathbb{F}}

\newcommand{\BZ}{\mathbb{Z}}

\newcommand{\F}{\mathcal{F}}

\begin{document}
\title[Realizing Ruiz-Viruel exotic systems]{Minimal characteristic bisets and finite groups realizing Ruiz-Viruel exotic fusion systems} 
\author{Sejong Park}
\address{Institute of Mathematics, University of Aberdeen, AB24 3UE, UK}
\email{s.park@abdn.ac.uk}
%\thanks{}
\keywords{fusion systems, exotic fusion systems, exoticity index}
\subjclass[2000]{20C20}
\date{November 2010}
\maketitle
\begin{abstract}
Continuing our previous work~\cite{Park2010Realizing}, we determine a minimal left characteristic biset $X$ for every exotic fusion system $\CF$ on the extraspecial group $S$ of order $7^3$ and exponent $7$ discovered by Ruiz and Viruel~\cite{RuizViruel2004}, and analyze the finite group $G$ obtained from $X$ by the method of \cite{Park2010Realizing} which realizes $\CF$ as the full subcategory of the $7$-fusion system of $G$.  In particular, we obtain an upper bound for the exoticity index for $\CF$.
\end{abstract}
%\tableofcontents
%\thispagestyle{empty}

\section{Introduction}

In our previous work~\cite{Park2010Realizing}, we observed that every saturated fusion system $\CF$ on a finite $p$-group $S$ can be {\em realized} by a finite group $G$ containing $S$ as a subgroup, in the sense that $\CF = \CF_S(G)$, i.e.\ the fusion system on $S$ whose morphisms are the $G$-conjugation maps among subgroups of $S$.  This observation lead us to propose the notion of the {\em exoticity index} $e(\CF)$ of $\CF$ as the minimum of the values $\log_p |S_0:S|$ where $S_0$ is a Sylow $p$-subgroup of $G$ containing $S$ as $G$ runs over all finite groups such that $\CF = \CF_S(G)$.  With this definition, $\CF$ is exotic if and only if $e(\CF) >0$, and $e(\CF)$ measures how far the fusion system $\CF$ is from being the fusion system of some finite group.  In~\cite{Park2010Realizing} the finite group $G$ was constructed by using a finite $S$-$S$-biset $X$ called a left characteristic biset of the fusion system $\CF$ as $G = \Aut({}_{1}X)$, i.e. the automorphism group of $X$ viewed as a right $S$-set ignoring the left $S$-action (or equivalently, restricting the left $S$-action to the trivial subgroup, whence the subscript $1$ on the left).  Then we have 
\[
	G \cong S \wr \Sigma_{e(X)} \quad\text{where } e(X) = |X|/|S|.
\]
The upper bound of the exoticity index $e(\CF)$ of $\CF$ obtained from this $G$ is as follows:
\[
	e(\CF) \leq (e(X)-1)\log_p |S| + \sum_{i\geq 1} \left\lfloor \frac{e(X)}{p^i} \right\rfloor \leq e(X)(1+\log_p |S|).
\]
As far as we know, no other method for constructing a finite group realizing a given saturated fusion system has been found.

In this paper, we determine minimal left characteristic bisets for some saturated fusion systems $\CF$ on the extraspecial group $S$ of order $p^3$ and exponent $p$ including all exotic fusion systems on $S$ discovered by Ruiz and Viruel~\cite{RuizViruel2004}, and thereby obtain finite groups realizing $\CF$ which are the smallest among those obtained by the method of \cite{Park2010Realizing} described in the previous paragraph.

\begin{thm} \label{T:main1}
Let $p$ be an odd prime and let $S$ be the extraspecial group of order $p^3$ and exponent $p$.  Let $\CF$ be a saturated fusion system on $S$ such that all subgroups of $S$ of order $p^2$ are $\CF$-radical.  Then there exists a unique minimal left characteristic biset X for $\CF$, and we have
\[
	e(X) = \frac{p^5-1}{p-1}|\Out_\CF(S)|.
\]
\end{thm}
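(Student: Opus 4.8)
The plan is to work entirely inside the monoid of right-free $(S,S)$-bisets, where every such biset decomposes uniquely as a disjoint union of transitive pieces $[P,\varphi]=S\times_P S$ with $P\le S$ and $\varphi\in\Hom(P,S)$ a monomorphism, the piece $[P,\varphi]$ having rank $[S:P]$ as a right $S$-set, so that $e(X)=\sum_{[P,\varphi]} [S:P]$. Being $\CF$-generated means every $\varphi$ occurring lies in $\Hom_\CF(P,S)$; left $\CF$-stability means ${}_\varphi X\cong{}_P X$ for all $\varphi\in\Hom_\CF(P,S)$; and the last defining condition is that $e(X)$ is prime to $p$. First I would record that, by the Sylow axiom at $S$, the group $\Aut_S(S)=\Inn(S)$ is Sylow in $\Aut_\CF(S)$, so $\Out_\CF(S)$ has order $n$ prime to $p$. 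Then I would invoke Alperin's fusion theorem to reduce left $\CF$-stability to invariance of $X$ under post-composition by $\Aut_\CF(S)$ together with the restriction-and-twist condition ${}_\beta({}_V X)\cong{}_V X$ for $\beta\in\Aut_\CF(V)$, $V$ ranging over the essential subgroups, which here are among the $p+1$ maximal subgroups (each abelian of order $p^2$ and self-centralizing, hence $\CF$-centric).

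Next I would pin down the subgroup structure forced by the radical hypothesis. Since $N_S(V)=S$ and $C_S(V)=V$, saturation gives that the Sylow $p$-subgroup of $\Aut_\CF(V)\le\GL_2(\BF_p)$ has order exactly $p$, while $V$ being $\CF$-radical gives $O_p(\Aut_\CF(V))=1$; by Dickson's classification this forces $\SL_2(\BF_p)\le\Aut_\CF(V)$. Consequently $\Aut_\CF(V)$ is transitive on the $p+1$ subgroups of order $p$ inside $V$; since every noncentral line lies in a unique such $V$ and the center $Z$ lies in all of them, all subgroups of order $p$ (the center included) fuse into a single $\CF$-conjugacy class. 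Thus the $\CF$-conjugacy classes of subgroups are exactly $\{1\}$, the single class of order-$p$ subgroups, the maximal subgroups (splitting into the $\Out_\CF(S)$-orbits on the $p+1$ lines), and $\{S\}$. I would then set up the mark (fixed-point) description: for $\varphi\colon P\to S$ and the twisted diagonal $\Delta_\varphi P\le S\times S$, the assignment $(P,\varphi)\mapsto |X^{\Delta_\varphi P}|$ is related to the multiplicities by a triangular transformation over the four-layer subgroup poset, and left $\CF$-stability is equivalent to this mark function being constant on $\CF$-conjugacy classes of twisted diagonals.

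With this dictionary in place I would solve the resulting triangular system from the top down. At the top layer the pieces $[S,\varphi]$ ($\varphi\in\Aut_\CF(S)$) have rank $1$ and their isomorphism classes are parametrized by $\Out_\CF(S)$; post-composition by $\Aut_\CF(S)$ permutes these $n$ classes in a single free orbit, so stability forces all $n$ of them to occur with a common multiplicity, and minimality forces that multiplicity to be $1$, contributing rank $n\cdot 1$. Descending to the maximal, order-$p$, and trivial layers, the restriction-and-twist condition under $\SL_2(\BF_p)\le\Aut_\CF(V)$ forces the lower pieces to appear in complete $\SL_2(\BF_p)$-orbit packets whose sizes are governed by the $\BF_p$-geometry of $V$; the requirement that all marks be genuine nonnegative integers, together with the $p'$-condition on $e(X)$, then singles out a unique componentwise-minimal nonnegative solution. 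Organizing this solution by the $\Out_\CF(S)$-action exhibits the minimal biset as $n$ disjoint translates of one block, and summing $\sum [S:P]$ over the four layers yields contributions of the shape $n(p^0+p^1+p^2+p^3+p^4)$, so that $e(X)=\tfrac{p^5-1}{p-1}\,|\Out_\CF(S)|$; one checks $e(X)\equiv n\not\equiv 0 \pmod p$, confirming the index condition.

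I expect the main obstacle to be the middle layer, namely the explicit decomposition of each transitive $(S,S)$-summand $[Q,\psi]$ as a $(V,S)$-biset and the bookkeeping of how $\Aut_\CF(V)\supseteq\SL_2(\BF_p)$ permutes the resulting $(V,S)$-orbit types $[R,\rho]$ with $R\le V$. This is where the transitivity on lines produces the geometric packet sizes $1,p,p^2,\dots$ that assemble into $\tfrac{p^5-1}{p-1}$, and it is also where I must prove that the componentwise-minimal choice of marks is simultaneously consistent across all of $\Aut_\CF(S)$ and the various $\Aut_\CF(V)$ (so that a genuine biset, not merely a virtual one, attains the minimum) and that it is uniquely determined. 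Verifying this consistency, and that the orbit-packet counts are independent of how the $p+1$ maximal subgroups break into $\Out_\CF(S)$-orbits so that the final rank depends on $\CF$ only through the factor $|\Out_\CF(S)|$, is the technical heart of the argument.
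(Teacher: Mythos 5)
Your overall strategy---encode the biset by its mark function on twisted diagonals, use the triangularity of the marks-to-multiplicities transformation over the layers, impose stability as an equality of marks along $\CF$-morphisms, and extract the minimal biset from nonnegativity of the resulting affine solution set---is the paper's strategy. But what you have written is a plan rather than a proof: the step you yourself flag as ``the technical heart'' is exactly where the content of the theorem lives, and none of it is carried out. Three specific things are missing. First, at the index-$p$ layer the decisive mechanism is not ``$\SL_2$-orbit packets'' but the extendable/nonextendable dichotomy: if the extendable classes with domain $V_i$ occur with multiplicity $c_1^{(i)}$, stability forces every nonextendable class to occur with multiplicity $c_0+pc_1^{(i)}$ (Proposition~\ref{T:layer1}), and it is this forced $+c_0$ that makes the minimal $X_1$ nonempty, consisting of precisely the $(p+1)|\Out_\CF(S)|$ nonextendable classes, each with multiplicity $1$. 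Second, at the order-$p$ layer the minimal multiplicities are $f-r_i$ for $[\la u_i\ra, u_j^m]$ when $V_i\cong_\CF V_j$ and $f$ otherwise (with $f=|\Out_\CF(S)|/(p-1)$), and $0$ for every class involving $z$; these values come out of the explicit fixed-point computation and are not visible from your ``packet sizes $1,p,p^2,\dots$'' heuristic. Third, the trivial-subgroup layer of the minimal biset is empty: the total $\frac{p^5-1}{p-1}$ arises as $1+p(p+1)+p^2\cdot p(p+1)$ from three layers, not as one power of $p$ per layer of a ``four-layer'' poset, so the asserted shape $n(p^0+\cdots+p^4)$ is unearned until the multiplicities are actually computed.

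Two further cautions. Your dictionary ``left stability $=$ marks constant on $\CF$-conjugacy classes of twisted diagonals'' is really the two-sided condition; left stability only says $|X^{\Delta_Q^\varphi}|=|X^{\Delta_{\varphi(Q)}^{\id}}|$, so a priori the minimal left-stable biset could be smaller than the minimal two-sided one. The paper sidesteps this by solving the right-stable system completely and observing that the unique minimal solution is isomorphic to its own opposite. Finally, ``the unique componentwise-minimal nonnegative solution'' must be shown to exist and to minimize the total size $\sum[S:P]\cdot(\text{multiplicity})$ simultaneously; this requires the explicit affine parametrization of the solution set by the free parameters $(c_0,c_1^{(i)},c_2^{(z)},c_2^{(u_i)})$, including the divisibility constraints needed for the dependent coefficients such as $c_2^{(u_i,z^m)}$ to be nonnegative integers. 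Without that parametrization neither the minimality nor the uniqueness claim is established.
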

A more detailed version of this theorem is stated and proved in \S\ref{S:minimal biset for RV} as Theorem~\ref{T:minimal biset for RV} and Remark~\ref{R:numerical rel}.

The upper bound for the exoticity index of a Ruiz-Viruel exotic fusion system $\CF$ given by the above theorem is quite large.  An extraspecial group $S$ of order $p^3$ and exponent $p$ affords exotic fusion systems only when $p=7$, and in that case there are three of them with the order of the outer automorphism groups of $S$ being 48, 72 and 96, respectively.  The upper bound that we obtain for the case where $|\Out_\CF(S)|=48$ is 425744, for example.

A natural question that follows is whether we can cut down the group $G$ to get a smaller upper bound for the exoticity index.  By Alperin's fusion theorem, an obvious candidate is the subgroup $H$ of $G$ generated by the normalizers of the $\CF$-essential subgroups of $S$.  After close analysis of some elements of the normalizers of $\CF$-essential subgroups of $S$, we find that even $H$ is quite large, though this gives us a better understanding of the fusion action of $G$ on $S$.

\begin{thm} \label{T:main2}
Let $p$ be an odd prime and let $S$ be an extraspecial group of order $p^3$ and exponent $p$.  Let $\CF$ be a saturated fusion system on $S$ such that all subgroups of $S$ of order $p^2$ are $\CF$-radical.  Let $X$ be the minimal left characteristic biset $X$ for $\CF$ given by Theorem~\ref{T:main1}.  Let $G = \Aut({}_{1}X) \cong S \wr \Sigma_{e(X)}$ and let $H$ be the subgroup of $G$ generated by the normalizers of the $\CF$-essential subgroups of $S$.  Then the image of $H$ in $\Sigma_{e(X)}$ is a transitive subgroup of $\Sigma_{e(X)}$.
\end{thm}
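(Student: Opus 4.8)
The plan is to translate the statement into a transitivity assertion about a genuine group action on a set of size $e(X)$, and then to fuse the orbits combinatorially using the two families of elements that visibly lie in $H$.

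First I would reduce to a cleaner object. Write $\Omega = X/S$ for the set of right $S$-orbits of $X$, so $|\Omega| = e(X)$. Since $X$ is free as a right $S$-set, the identification $G = \Aut({}_1X) \cong S \wr \Sigma_{e(X)} = S^{e(X)} \rtimes \Sigma_{e(X)}$ exhibits the base group $S^{e(X)}$ as the kernel of the action of $G$ on $\Omega$, while the induced map $G \to \Sym(\Omega)$ is exactly the projection $G \to \Sigma_{e(X)}$. Thus the image of $H$ in $\Sigma_{e(X)}$ is transitive if and only if $H$ acts transitively on $\Omega$. Next I would observe that $S \le H$: each $\CF$-essential subgroup $V$ has order $p^2$, hence is maximal and therefore normal in $S$, so $S = N_S(V) \le N_G(V) \le H$. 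Consequently every $H$-orbit on $\Omega$ is a union of $S$-orbits, and it remains only to show that the extra elements of the groups $N_G(V)$ fuse all the $S$-orbits into one.

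The second step is to make the $S$-orbits explicit. Using the decomposition of the minimal biset from Theorem~\ref{T:minimal biset for RV}, write $X = \coprod_\alpha S \times_{(P_\alpha, \phi_\alpha)} S$; quotienting by the right $S$-action then gives an isomorphism of left $S$-sets $\Omega \cong \coprod_\alpha S/Q_\alpha$ with $Q_\alpha = \phi_\alpha(P_\alpha)$. Hence the $S$-orbits on $\Omega$ are indexed by the biset orbits, the orbit attached to $\alpha$ having size $|S:Q_\alpha|$ and point stabilizers the $S$-conjugates of $Q_\alpha$. The subgroups $Q_\alpha$ that occur are $S$ itself, the maximal subgroups $V$, and the trivial subgroup (together with whatever intermediate subgroups Theorem~\ref{T:minimal biset for RV} records), each with a multiplicity governed by $|\Out_\CF(S)|$; I will refer to the $|\Out_\CF(S)|$ blocks produced by the left $\Aut_\CF(S)$-stability of $X$ as the \emph{sheets} of $\Omega$.

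The heart of the argument is to link these orbits using two kinds of elements that lie in $H$. The first is the centralizer: since $C_G(V) \le N_G(V) \le H$, and $C_G(V)$ is precisely the group of automorphisms of $X$ as a $(V,S)$-biset, it surjects onto the group $\Aut_V(\Omega)$ of automorphisms of $\Omega$ as a $V$-set; in particular it permutes transitively any family of $S$-orbits that are mutually isomorphic as $V$-sets. Applied to the $V$-free orbits — above all the regular orbits $S/1$, which are $V$-isomorphic across all sheets — this already merges the corresponding orbits sitting in different sheets. The second is a realizing element: because $\CF = \CF_S(G)$, every $\psi \in \Aut_\CF(V)$ equals $c_g|_V$ for some $g \in N_G(V) \le H$, and for a point $\omega$ whose $S$-stabilizer $Q \le V$ one computes $\mathrm{Stab}_S(\bar g\omega) = S \cap gQg^{-1} = \psi(Q)$, so $\bar g$ carries the orbit with stabilizer $Q$ to the one with stabilizer $\psi(Q)$. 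Since $\CF$-essentiality forces $\Aut_\CF(V)$ to contain $\SL_2(p)$, which is transitive on the $p+1$ lines of $V$, this links all orbits whose stabilizer is a line of $V$; in particular it links the central orbit $S/Z$ to every non-central line orbit inside $V$. Moreover, some such $\psi$ sends $Z$ to a non-central line of $V$ and therefore cannot extend to an automorphism of $S$ (which must fix $Z = Z(S)$); for that $\psi$ the realizing element satisfies $gSg^{-1} \ne S$, so $\bar g$ moves the $S$-fixed top points off their fixed positions and links the top orbits to orbits with proper stabilizer.

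Finally I would assemble these links. The line-linking through the various $V$ is chained together using the fact that $Z = Z(S)$ lies in every $V$, so the common central orbit $S/Z$ ties all the subgroups $V$ together; the centralizer elements merge the regular orbits across sheets; and the displacement of the top points connects the small-stabilizer orbits to the regular ones. The main obstacle, and the point requiring the close analysis of explicit elements of the $N_G(V)$ alluded to above, is twofold: first, controlling the \emph{full} $S$-stabilizer $S \cap gQg^{-1}$ after applying a realizing element, so that each jump lands in the intended orbit rather than in an $S$-conjugate lying in a different biset summand; and second, verifying that the $|\Out_\CF(S)|$ sheets genuinely coalesce even though $H$ is generated by essential normalizers only and contains no a priori generator for $\Aut_\CF(S)$. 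Both are settled by exhibiting, for suitable $\psi \in \Aut_\CF(V)$, concrete preimages $g \in N_G(V)$ and tracking their permutation parts, after which transitivity of $H$ on $\Omega$, and hence of the image of $H$ in $\Sigma_{e(X)}$, follows.
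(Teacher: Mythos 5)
Your overall frame -- reduce to transitivity of $H$ on $\Omega = X/S$, note $S\le H$, and link the $\ol{S}$-orbits using elements of the $N_G(V_i)$ that realize $\CF$-automorphisms -- is exactly the paper's setup. But the concrete linking scheme is built on a wrong picture of what $\Omega$ is, and this is not a cosmetic slip: it breaks the chain at precisely the junction the theorem is about. By Theorem~\ref{T:minimal biset for RV}, the minimal biset has only three layers, $X = X_0\amalg X_1\amalg X_2$, with $X_2$ consisting solely of pieces $(S\times S)/\Delta_{\la u_i\ra}^{u_j^m}$; the coefficients $c_2^{(z,z^m)}$, $c_2^{(u_i,z^m)}$, $c_2^{(z,u_j^m)}$ all vanish and there is no layer $X_3$. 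Hence the point stabilizers occurring in $\Omega$ are conjugates of $S$, of the $V_i$, and of the \emph{non-central} order-$p$ subgroups $\la u_i\ra$ only: there are no regular orbits $S/1$ and no central orbit $S/Z$. Both of your fusing devices use exactly these missing orbits -- the centralizer trick is ``applied to the regular orbits $S/1$'', and the $\SL_2(p)$-transitivity-on-lines trick uses ``the common central orbit $S/Z$'' as the hub tying the various $V$ together and linking to the non-central line orbits. Neither hub exists, so as written no link is established between the singleton orbits, the size-$p$ orbits, and the size-$p^2$ orbits.

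The subtlety you flag but defer -- that $\mathrm{Stab}_S(\ol{g}\omega)$ only \emph{contains} $\psi(Q)$ -- is in fact the engine of the correct argument rather than a technicality to be checked: since no orbit has stabilizer $\la z\ra$, a realizing element $g$ for a nonextendable $\varphi\in\Aut_\CF(V_0)$ (which sends $u_0$ into $\la z\ra$) must carry points of the size-$p^2$ orbits $(S\times S)/\Delta_{\la u_0\ra}^{u_j^m}$ into orbits with strictly larger stabilizer, and symmetrically must push the $\Aut_\CF(S)$-singletons down into size-$p$ orbits. The paper makes this precise not by stabilizer counting but by comparing the decompositions of ${}_{V_0}Y$ and ${}_{\varphi}Y$ for each transitive subbiset $Y$ via the double coset formula (Lemma~\ref{T:res-biset}) and matching the resulting transitive $V_0$-$S$-subbisets by size and by extendability of the associated maps ($\alpha|_{V_0}$ extendable versus $\alpha\circ\varphi$ nonextendable, etc.); the numerical identities of Remark~\ref{R:numerical rel} guarantee the counts match. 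That bookkeeping is what forces any biset isomorphism ${}_{V_0}X\cong{}_{\varphi}X$, i.e.\ any realizing element in $N_G(V_0)$, to merge $\ol{S}$-orbits of different sizes. To repair your proof you would need to replace both hubs by this extendability analysis (or an equivalent stabilizer-growth argument); the centralizer observation $C_G(V)\le H$ is correct but, applied to the $V$-free orbits that actually occur (the pieces $\Delta_{\la u_i\ra}^{u_j^m}$ with $u_i\notin V$), it only merges size-$p^2$ orbits among themselves and never reaches the top two layers.
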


This theorem is proved in \S\ref{S:finite group for RV} as Theorem~\ref{T:main2'}.

{\bf Organization of the paper:}  In \S\ref{S:char biset}, we recall the definition of characteristic bisets for fusion systems and fix basic notations.  We also compare notational conventions in the literature about bisets concerning fusion systems in Remark~\ref{R:opposite}.  In \S\ref{S:fixed pts}, we introduce the notion of  layers of bisets and review some basic results about the number of fixed points which will be used in the subsequent sections.  \S\ref{S:min biset up to index p} contains a general result which determines the top two layers of a characteristic biset of any saturated fusion system.  Using this result and detailed information of the saturated fusion systems on an extraspecial group $S$ of order $p^3$ and exponent $p$ in \cite{RuizViruel2004}, we determine minimal left characteristic bisets for saturated fusion systems $\CF$ on $S$ such that all subgroups of $S$ of order $p^2$ are $\CF$-radical in \S\ref{S:minimal biset for RV}.  A more detailed version of Theorem~\ref{T:main1} is stated and proved in this section as Theorem~\ref {T:minimal biset for RV} and Remark~\ref{R:numerical rel}.  In \S\ref{S:finite group for RV}, we analyze some elements of the normalizers of $\CF$-essential subgroups of $S$ and obtain Theorem~\ref{T:main2} as a consequence.  Finally in the Appendix~\ref{S:coeff char id}, we determine coefficients of characteristic idempotents as an easy application of results in \S\ref{S:min biset up to index p} and \S\ref{S:minimal biset for RV}.

{\bf Acknowledgments:} The author would like to express his special thanks to Ronald Solomon and Justin Lynd for very helpful discussions about the content of this paper and their warm hospitality whenever he visited The Ohio State University.

\section{Characteristic bisets for fusion systems} \label{S:char biset}

First let us fix notations.  We keep the notations in our previous paper~\cite{Park2010Realizing} and expand on them as needed.  Our notations are largely compatible with (and in fact many of them derive from) those in \cite{Ragnarsson2006,RagnarssonStancuSFSasIdemp}, except that the directions of maps are inverted.  See Remark~\ref{R:opposite} for more details.  

Let $S_1$, $S_2$ be groups. An $S_1$-$S_2$-biset is a set $X$ with left $S_1$-action and right $S_2$-action such that $(ux)v=u(xv)$ for $x\in X$, $u\in S_1, v\in S_2$.  An $S_1$-$S_2$-biset $X$ can be viewed as a (left) $(S_1\times S_2)$-set via $(u,v)\cdot x = uxv^{-1}$ for $x\in X$, $u\in S_1$, $v\in S_2$ and vice versa.  Throughout this paper, we will go freely back and forth between $S_1$-$S_2$-bisets and $(S_1\times S_2)$-sets using this correspondence.  That way, we can exploit the advantages of both perspectives.  On one hand, bisets can be composed: if $S_3$ is another group and $Y$ is an $S_2$-$S_3$-biset, the set 
\[
	X\times_{S_2} Y = (X\times Y) / \sim,
\]
where $(xu,y) \sim (x,uy)$ for $x \in X$, $y \in Y$, $u \in S_2$, is an $S_1$-$S_3$-biset by the left action of $S_1$ on $X$ and the right action of $S_3$ on $Y$. On the other hand, $X$ viewed as an $(S_1 \times S_2)$-set has the fixed-point subset $X^H = \{ x\in X \mid hx=x \text{ for all } h \in H \}$ by any subgroup $H\leq S_1 \times S_2$, and the number of fixed-points $|X^H|$ as $H$ runs over all subgroups of $S_1\times S_2$ determine the isomorphism type of $X$ by Burnside's theorem.

For a subgroup $Q$ of $S_1$ and an injective group homomorphism $\varphi \colon Q \to S_2$, let
\[
	S_1 \times_{(Q,\varphi)}S_2 = (S_1 \times S_2) / \sim
\]
where $(xu,y) \sim (x,\varphi(u)y)$ for $x\in S_1$, $y\in S_2$, $u\in Q$, and let $\la x,y \ra$ be the $\sim$-equivalence class containing $(x,y)$.  One can view this set as an $S_1$-$S_2$-biset by $u \cdot \la x,y\ra = \la ux,y\ra$, $\la x,y\ra \cdot v = \la x,yv\ra$ for $x, u \in S_1$, $y,v \in S_2$.  It is a transitive $S_1$-$S_2$-biset with free left $S_1$-action and free right $S_2$-action.  In fact, every such $S_1$-$S_2$-biset is of this form.  Viewed as an $(S_1\times S_2)$-set, it is isomorphic to 
\[
	(S_1\times S_2)/\Delta^{\varphi}_{Q}
\]
where $\Delta^{\varphi}_{Q} = \{ (u,\varphi(u)) \colon u\in Q \}$. 

Two transitive $S_1$-$S_2$-bisets $S_1\times_{(Q,\varphi)} S_2$ and $S_1\times_{(R,\psi)} S_2$ where $Q, R \leq S_1$, $\varphi\colon Q \to S_2$, $\psi\colon R \to S_2$ are isomorphic if and only if $\Delta_{Q}^{\varphi}$ is conjugate to $\Delta_{R}^{\psi}$ in $S_1\times S_2$, which in turn holds if and only if there exist $x\in S_1$, $y\in S_2$ such that ${}^x{}Q:=xQx^{-1} =R$ and $\psi\circ c_x|_Q = c_y \circ \varphi$ where $c_x \colon S_1\to S_1$ denotes the conjugation map by $x$ given by $c_x(u)=xux^{-1}$ for $u\in S_1$, and $c_y$ is defined analogously.  If this is the case, we say that $\varphi$ and $\psi$ are {\em $S_1$-$S_2$-conjugate}, and write $\varphi \sim_{(S_1,S_2)} \psi$.  We write $[\varphi]$ for the $S_1$-$S_2$-conjugacy class containing $\varphi$.  More generally, we say that $\varphi$ is {\em $S_1$-$S_2$-subconjugate} to $\psi$ and write $\varphi \precsim_{(S_1,S_2)} \psi$ if there exist $x\in S_1$, $y\in S_2$ such that ${}^x{}Q \leq R$ and $\psi\circ c_x|_Q = c_y \circ \varphi$

Finally, for an $S_1$-$S_2$-biset $X$, $Q\leq S_1$, and an injective group homomorphism $\varphi\colon Q\to S_2$, let ${}_{\varphi}X$ be the $Q$-$S_2$-biset obtained from $X$ where the left $Q$-action is induced by $\varphi$; let ${}_{Q}X := {}_{(\id_Q)}X$.  Equivalently, 
\[
	{}_{\varphi}X \cong (Q \times_{(Q,\varphi)} S_1) \times_{S_1} X.
\]
For $R\leq S_2$ and an injective group homomorphism $\psi\colon R \to S_2$, $S_1$-$R$-bisets $X_\psi$ and $X_R$ are defined analogously.

\begin{dfn} \label{D:stable biset}
Let $\CF$ be a fusion system on a finite $p$-group $S$. A finite $S$-$S$-biset $X$ is called a {\em left characteristic biset} for the fusion system $\CF$ if it satisfies the following conditions:
\begin{enumerate}
\renewcommand{\theenumi}{\Alph{enumi}}
\item Every transitive subbiset of $X$ is of the form $S\times_{(Q,\varphi)} S$ where $Q\leq S$ and $\varphi\in\Hom_\CF (Q, S)$. \label{generation}
\item $e(X):=|X|/|S| \not\equiv 0 \mod p$. \label{coprime}
\item (left $\CF$-stability) ${}_{Q}X \cong {}_{\varphi}X$ as $Q$-$S$-bisets for every $Q\leq S$ and $\varphi\in\Hom_\CF(Q,S)$. \label{stable}
\end{enumerate}
Similarly, $X$ is called a {\em right characteristic biset} for $\CF$ if it satisfies the conditions \eqref{generation}, \eqref{coprime}, and
\begin{enumerate}
\renewcommand{\theenumi}{\Alph{enumi}'}
\addtocounter{enumi}{2}
\item (right $\CF$-stability) $X_Q \cong X_\varphi$ as $S$-$Q$-bisets for every $Q\leq S$ and $\varphi\in\Hom_\CF(Q,S)$. \label{rstable}
\end{enumerate}
If X satisfies the conditions \eqref{generation}, \eqref{coprime}, \eqref{stable} and \eqref{rstable}, then $X$ is called a {\em characteristic biset} for $\CF$.
\end{dfn}

Throughout this paper, we will refer to the conditions in this definition by the symbols \eqref{generation}, \eqref{coprime}, \eqref{stable} and \eqref{rstable}.

The following result, due to Broto, Levi and Oliver, is fundamental for this paper.

\begin{thm}[{\cite[5.5]{BrotoLeviOliver2003b}}]
Every saturated fusion system has a characteristic biset.
\end{thm}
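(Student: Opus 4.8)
The plan is to construct a two-sided characteristic biset directly, assembling it as a nonnegative integer combination
\[
	X = \sum_{[Q,\varphi]} a_{[Q,\varphi]}\,[\,S\times_{(Q,\varphi)}S\,],
\]
where the sum runs over the $S$-$S$-conjugacy classes of pairs $(Q,\varphi)$ with $Q\leq S$ and $\varphi\in\Hom_\CF(Q,S)$, and the multiplicities $a_{[Q,\varphi]}\geq 0$ are to be chosen. Building $X$ out of these blocks alone makes condition \eqref{generation} hold automatically, so the whole problem reduces to choosing the multiplicities to force \eqref{coprime}, \eqref{stable} and \eqref{rstable}. The first step is to reformulate the stability conditions in terms of fixed points: by Burnside's theorem $X$ is determined by the numbers $|X^{\Delta_Q^\psi}|$, and the left $\CF$-stability condition ${}_QX\cong {}_\varphi X$ becomes a finite system of linear equations equating certain of these fixed-point counts as $\varphi$ ranges over $\Hom_\CF(Q,S)$. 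Thus I seek nonnegative integer multiplicities solving this system while keeping $e(X)=\sum a_{[Q,\varphi]}|S:Q|$ prime to $p$.

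The construction proceeds by downward induction on $|Q|$. At the top level $Q=S$ one cannot use a single copy of the identity biset $S=S\times_{(S,\id)}S$, since that is $\CF$-stable only when $\Aut_\CF(S)=\Inn(S)$; instead I include the block $\coprod_{\bar\varphi}S\times_{(S,\varphi)}S$ with $\bar\varphi$ running over $\Out_\CF(S)$, which repairs stability at the top and contributes $e=|\Out_\CF(S)|$ to $e(X)$. Since $S$ is fully normalized, the Sylow axiom gives $\Inn(S)=\Aut_S(S)\in\mathrm{Syl}_p(\Aut_\CF(S))$, so $|\Out_\CF(S)|$ is prime to $p$; as every lower block has $Q<S$ and hence $|S:Q|\equiv 0\pmod p$, we get $e(X)\equiv|\Out_\CF(S)|\not\equiv 0\pmod p$, which is exactly condition \eqref{coprime}. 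Descending through the subgroup lattice, I process one $\CF$-class $[Q]$ at a time, always choosing a fully normalized representative $Q$ (which exists by saturation), under the inductive hypothesis that stability already holds at all higher levels; since adjoining copies of $S\times_{(Q,\varphi)}S$ only affects fixed-point counts indexed by subgroups subconjugate to $Q$, the higher levels are not disturbed.

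The main obstacle, and the only place the saturation axioms enter, is the inductive repair step: showing that the stability defect at level $Q$ can be cancelled using nonnegative integer multiplicities. The two axioms play complementary roles. Because $Q$ is fully automized, $\Aut_S(Q)\in\mathrm{Syl}_p(\Aut_\CF(Q))$, so the index $|\Aut_\CF(Q):\Aut_S(Q)|$ is prime to $p$; this is what controls the relevant orbit counts and lets one divide and produce integer solutions. Because $Q$ is fully centralized, the extension axiom lets every $\varphi\in\Hom_\CF(Q,S)$ (after moving $\varphi(Q)$ to a fully centralized representative) extend appropriately, which is precisely what forces the fixed-point counts on the two sides of ${}_QX\cong{}_\varphi X$ to agree once the multiplicities at level $Q$ are chosen $\Aut_\CF(Q)$-invariantly. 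Verifying that these two facts combine into a consistent, solvable, nonnegative system is the technical heart of the argument; it is also where one sees that saturation is essential, since a non-saturated system need not admit any characteristic biset at all.

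Finally, to obtain a genuine characteristic biset rather than merely a left one, I would symmetrize the construction against the opposite biset. The opposite of $S\times_{(Q,\varphi)}S$ is $S\times_{(\varphi(Q),\varphi^{-1})}S$, and since $\CF$ is closed under inverses of its isomorphisms, the involution $(Q,\varphi)\mapsto(\varphi(Q),\varphi^{-1})$ preserves the order $|Q|$ and hence is compatible with the level-by-level induction. Choosing the multiplicity function to be invariant under this involution makes $X\cong X^{\op}$; for such an $X$, taking opposites shows that left $\CF$-stability and right $\CF$-stability are equivalent, so \eqref{stable} automatically yields \eqref{rstable}. This produces an $X$ satisfying all of \eqref{generation}, \eqref{coprime}, \eqref{stable} and \eqref{rstable}, completing the construction.
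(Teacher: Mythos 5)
The paper gives no proof of this statement: it is quoted directly from \cite[5.5]{BrotoLeviOliver2003b}, with the remark following it noting (after Ragnarsson) that the biset constructed there is in fact two-sided. So your sketch has to be measured against that construction, and in outline it is the same one: build $X$ as a nonnegative combination of orbits $S\times_{(Q,\varphi)}S$ so that \eqref{generation} is automatic, seed the top layer with $\coprod_{[\alpha]\in\Out_\CF(S)}S\times_{(S,\alpha)}S$ so that \eqref{coprime} follows from the Sylow axiom, translate stability into fixed-point equations, and repair the defect by downward induction on $|Q|$. Those reductions are correct (they are essentially Proposition~\ref{T:layer0} and Lemma~\ref{T:stability by fixed-pts} of this paper).

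The gap is the inductive repair step, which you explicitly defer (``the technical heart'') rather than prove. Concretely: at level $Q$, write $n_{[\varphi]}$ for the contribution of the already-chosen higher layers to $|X^{\Delta_Q^\varphi}|$. By Lemma~\ref{T:no of fixed pts}, the orbits added at this level contribute diagonally with respect to $S$-$S$-conjugacy, so the equations to solve are $n_{[\varphi]}+a_{[\varphi]}\cdot\frac{|N_\varphi|}{|Q|}|C_S(\varphi(Q))|=C$ for a single constant $C$ on the whole $\CF$-class. The diagonal coefficients genuinely vary with $\varphi$ (for extendable $\varphi$ one has $N_\varphi=S$, while for nonextendable $\varphi$ it can be as small as $Q$), so you must prove both that $C$ can be chosen at least $\max_{\varphi} n_{[\varphi]}$ and that each difference $C-n_{[\varphi]}$ is divisible by the corresponding coefficient. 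Neither follows formally from ``fully automized'' plus the extension axiom; it is exactly the computation this paper has to carry out by hand even at index $p$ (Proposition~\ref{T:layer1}, where the multiplicities come out as $c_1^{(i)}$ for extendable and $c_0+pc_1^{(i)}$ for nonextendable classes, and the verification leans on \cite[A.8]{BrotoLeviOliver2003b}). A second, smaller issue is your symmetrization: the implication ``$X$ left stable and $X\cong X^{\op}$ implies $X$ right stable'' is valid, but you still need to check that imposing op-invariance on the multiplicities keeps the system solvable in nonnegative integers --- the naive substitute $X\amalg X^{\op}$ is not obviously left stable and doubles $e(X)$, which breaks \eqref{coprime} when $p=2$. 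The cleaner route, and what the cited construction actually delivers, is to arrange $|X^{\Delta_Q^\varphi}|$ to depend only on the $\CF$-isomorphism class of $Q$, which yields \eqref{stable} and \eqref{rstable} simultaneously via Lemma~\ref{T:stability by fixed-pts}.
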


\begin{rmk}
\cite[5.5]{BrotoLeviOliver2003b} only states the existence of a right characteristic biset. But in fact what is constructed in the proof of \cite[5.5]{BrotoLeviOliver2003b} is a characteristic biset, as pointed out by Ragnarsson in the paragraph right before Proposition 4.4 in \cite{Ragnarsson2006}.
\end{rmk}

We define an object which will play a central role in this paper.

\begin{dfn}
Let $\CF$ be a saturated fusion system on a finite $p$-group $S$. A (left or right) characteristic biset $X$ for $\CF$ is called {\em minimal} if it has the smallest size among all possible (left or right) characteristic bisets for $\CF$.
\end{dfn}

Finally, we introduce the opposite operation, which is useful when comparing left and right stability.

\begin{dfn}
Let $S_1$ and $S_2$ be groups. For an $S_1$-$S_2$-biset $X$, let $X^\op$ be the $S_2$-$S_1$-biset which has the same underlying set as $X$ and such that $v\cdot x\cdot u = u^{-1}xv^{-1}$ for $x\in X$, $u\in S_1$, $v \in S_2$.
\end{dfn}

If $X$ is a left (resp.\ right) characteristic biset for a fusion system $\CF$, then $X^\op$ is a right (resp.\ left) characteristic biset for $\CF$.  If $S_1$, $S_2$ are groups, $Q\leq S_1$ and $\varphi\colon Q\to S_2$ is an injective group homomorphism, then
\[
	( S_1 \times_{(Q,\varphi)} S_2 )^\op \cong S_2 \times_{(\varphi(Q),\varphi^{-1})} S_1
\]
See \cite[\S 3.5]{RagnarssonStancuSFSasIdemp} for a more discussion of this opposite operation.

\begin{rmk} \label{R:opposite}
Our use of the terminology `characteristic biset' derives from Ragnarsson~\cite{Ragnarsson2006}, and Ragnarsson and Stancu~\cite{RagnarssonStancuSFSasIdemp}, whereas Puig~\cite[Ch.\ 21]{PuigBook2009} calls them {\em basic sets}.   But our convention in which maps defining transitive bisets are directed from left to right is close to Puig's; Ragnarsson and Stancu use the opposite notations.  More precisely, the correspondence between Ragnarsson and Stancu's notation and ours is as follows:
\[
	[S_1 \times_{(Q,\varphi)}S_2] = [\varphi(Q),\varphi^{-1}]_{S_2}^{S_1} = \left([Q,\varphi]_{S_1}^{S_2}\right)^\op,
\]
where the square bracket denotes the isomorphism class of bisets.  Accordingly, our left (resp.\ right) stabilitiy corresponds to Ragnarsson and Stancu's right (resp.\ left) stability.
\end{rmk}

\section{Layers of characteristic bisets and counting fixed-points} \label{S:fixed pts}

\begin{dfn}
Let $S$ be a finite group. Let $X$ be an $S$-$S$-biset with free left and right action of $S$.  For each $r\geq 0$, let $X_r$ denote the `$r$-th layer' of $X$, i.e.\ the union of the transitive subbisets of $X$ of the form
\[
	S\times_{(Q,\varphi)}S\quad\text{with } |S:Q|=p^r,
\]
so that
\[
	X = X_0 \amalg X_1 \amalg X_2 \amalg \cdots.
\]
Let $X_{\leq r}=\coprod_{0\leq i\leq r} X_i$.  
\end{dfn}

A very effective way of studying finite sets with action of a finite group is counting the number of fixed-points by subgroups.  Indeed it is a standard fact due to Burnside that, for a finite group $G$, two finite $G$-sets $X$ and $Y$ are isomorphic if and only if $|X^H|=|Y^H|$ for all $H\leq G$.  

First we make a simple, yet useful, observation.

\begin{lem} \label{T:fixed points lemma}
Let $\CF$ be a saturated fusion system on a finite $p$-group $S$ with a left or right characteristic biset $X$.
\begin{enumerate}
\item For $U\leq S\times S$, we have $X^{U}\neq\emptyset$ if and only if there is a transitive subbiset $S\times_{(Q,\varphi)} S$ of $X$ such that $U$ is conjugate in $S\times S$ to a subgroup of $\Delta_Q^{\varphi}$.  In particular, $U=\Delta_{R}^{\psi}$ for some $R\leq S$ and $\psi\in\Hom_\CF(R,S)$.
\item $|X^{\Delta_{Q}^{\varphi}}|=|X_{\leq r}^{\Delta_{Q}^{\varphi}}|$ if $|S\colon Q|=p^r$.
\end{enumerate}
\end{lem}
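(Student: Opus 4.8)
The plan is to reduce both parts to the elementary fixed-point criterion for a transitive action: for a finite group $G$ and subgroups $K,U\leq G$, we have $(G/K)^U\neq\emptyset$ if and only if $U$ is conjugate in $G$ to a subgroup of $K$, since the coset $gK$ is fixed by $U$ exactly when $g^{-1}Ug\leq K$. Taking $G=S\times S$ and decomposing $X$ into its $(S\times S)$-orbits---each isomorphic to $(S\times S)/\Delta_Q^\varphi$ for a transitive subbiset $S\times_{(Q,\varphi)}S$ of $X$ by condition~\eqref{generation}---this gives at once the first assertion of (1): $X^U\neq\emptyset$ if and only if $U$ is $(S\times S)$-conjugate to a subgroup of some $\Delta_Q^\varphi$.

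For the ``in particular'' clause I would argue that such a $U$ must itself be a diagonal subgroup of $\CF$-type. First, every subgroup of $\Delta_Q^\varphi$ equals $\Delta_R^{\varphi|_R}$ for a unique $R\leq Q$, because the first projection $\Delta_Q^\varphi\to Q$ is an isomorphism. Writing $\mu=\varphi|_R$ and conjugating by an arbitrary element $(a,b)\in S\times S$ gives
\[
	(a,b)\,\Delta_R^{\mu}\,(a,b)^{-1}=\Delta_{{}^aR}^{\psi},\qquad \psi=c_b\circ\mu\circ c_{a^{-1}},
\]
so $U=\Delta_{{}^aR}^{\psi}$ with $\psi\colon {}^aR\to S$. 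The one thing that needs verification is $\psi\in\Hom_\CF({}^aR,S)$: here $c_{a^{-1}}\colon {}^aR\to R$ and $c_b$ are conjugations by elements of $S$ and hence lie in $\CF$, $\mu=\varphi|_R\in\Hom_\CF(R,S)$ by closure of $\CF$ under restriction, and $\CF$ is closed under composition. This is the only place where condition~\eqref{generation} enters, and keeping track of the directions of the two conjugations is the main---though entirely routine---point of care.

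For part (2) I would use $X=\coprod_{s\geq 0}X_s$ to write $|X^{\Delta_Q^\varphi}|=\sum_{s\geq 0}|X_s^{\Delta_Q^\varphi}|$ and show that each term with $s>r$ vanishes. Applying the criterion of (1) one layer at a time, $X_s^{\Delta_Q^\varphi}\neq\emptyset$ forces $\Delta_Q^\varphi$ to be $(S\times S)$-subconjugate to some $\Delta_P^\mu$ with $|S:P|=p^s$; comparing orders then yields $|Q|=|\Delta_Q^\varphi|\leq|\Delta_P^\mu|=|P|$, i.e.\ $p^r=|S:Q|\geq|S:P|=p^s$ and hence $s\leq r$. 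Thus only the layers $X_0,\dots,X_r$ contribute, which is precisely $|X^{\Delta_Q^\varphi}|=|X_{\leq r}^{\Delta_Q^\varphi}|$. Since the whole argument uses only condition~\eqref{generation}, which is common to the definitions of left and right characteristic bisets, it applies without change in either case.
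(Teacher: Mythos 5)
Your proposal is correct and follows essentially the same route as the paper: both reduce to the standard fact that $(G/H)^K\neq\emptyset$ iff $K$ is $G$-conjugate into $H$, applied to the orbit decomposition of $X$ as an $(S\times S)$-set, together with the observation that the family of $\CF$-graph subgroups $\Delta_Q^\varphi$ is closed under passing to subgroups and under $(S\times S)$-conjugation. The paper states this closure property without proof, whereas you verify it explicitly (including the formula $(a,b)\,\Delta_R^{\mu}\,(a,b)^{-1}=\Delta_{{}^aR}^{c_b\circ\mu\circ c_{a^{-1}}}$ and the order comparison for part (2)); the details you supply are accurate.
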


\begin{proof}
In general, if $G$ is a group and $H$, $K$ are subgroups of $G$, we have $(G/H)^K \neq \emptyset$ if and only if $K$ is $G$-conjugate to a subgroup of $H$.  The lemma follows immediately from this fact and that the family of subgroups of $S\times S$ of the form $\Delta_Q^\varphi$ where $Q\leq S$ and $\varphi\in\Hom_\CF(Q,S)$ is invariant under conjugation in $S\times S$.
\end{proof}

In the following lemma, we record what the stability conditions \eqref{stable} and \eqref{rstable} in Definition~\ref{D:stable biset} mean in terms of the number of fixed-points.  It is a slight variation of \cite[4.8]{RagnarssonStancuSFSasIdemp}, which reduces the number of equations to consider.

\begin{lem} \label{T:stability by fixed-pts}
Let $\CF$ be a fusion system on a finite $p$-group $S$, and let $X$ be a finite $S$-$S$-biset satisfying the condition \eqref{generation}. Then the left stability condition \eqref{stable} is equivalent to the following condition:
\begin{enumerate}
\item[] $|X^{\Delta_{Q}^{\varphi}}|=|X^{\Delta_{\varphi(Q)}^{\id}}|$ for all $Q\leq S$, $\varphi\in
\Hom_\CF(Q,S)$;
\end{enumerate}
the right stability condition \eqref{rstable} is equivalent to the following condition:
\begin{enumerate}
\item[] $|X^{\Delta_{Q}^{\varphi}}|=|X^{\Delta_{Q}^{\id}}|$ for all $Q\leq S$, $\varphi\in\Hom_
\CF(Q,S)$.
\end{enumerate}
\end{lem}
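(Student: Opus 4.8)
The plan is to translate both stability conditions into statements about fixed-point numbers via Burnside's theorem, and then to show that the full list of fixed-point equations collapses to the single stated family. Throughout I pass freely between an $S_1$-$S_2$-biset and the associated $(S_1\times S_2)$-set. The key computational observation is that twisting the left action by $\varphi\colon Q\to S$ corresponds, at the level of $(Q\times S)$-sets, to the injective map $\varphi\times\id\colon Q\times S\to S\times S$, $(q,s)\mapsto(\varphi(q),s)$; concretely $({}_\varphi X)^V = X^{(\varphi\times\id)(V)}$ and $({}_Q X)^V = X^V$ for every $V\le Q\times S$. Symmetrically, twisting the right action by $\varphi$ corresponds to $\id\times\varphi\colon S\times Q\to S\times S$, so that $(X_\varphi)^V = X^{(\id\times\varphi)(V)}$ and $(X_Q)^V = X^V$ for $V\le S\times Q$. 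By Burnside's theorem, the left stability ${}_Q X\cong{}_\varphi X$ holds for a fixed pair $(Q,\varphi)$ if and only if $|X^V| = |X^{(\varphi\times\id)(V)}|$ for all $V\le Q\times S$, and analogously on the right.

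The easy implication is a specialization. For the left case, take $V=\Delta_Q^\varphi\le Q\times S$; a direct computation gives $(\varphi\times\id)(\Delta_Q^\varphi)=\Delta_{\varphi(Q)}^{\id}$, so the equation $|X^{\Delta_Q^\varphi}|=|X^{\Delta_{\varphi(Q)}^{\id}}|$ is precisely the $V=\Delta_Q^\varphi$ instance of left stability. For the right case, take $V=\Delta_Q^{\id}\le S\times Q$ and compute $(\id\times\varphi)(\Delta_Q^{\id})=\Delta_Q^\varphi$, yielding $|X^{\Delta_Q^\varphi}|=|X^{\Delta_Q^{\id}}|$. Thus each stated condition is necessary.

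For the converse I cut down the family of subgroups $V$ that must be tested. Using condition \eqref{generation} and Lemma~\ref{T:fixed points lemma}(1), $X^V\neq\emptyset$ forces $V$ to be a graph $\Delta_R^\psi$ with $\psi\in\Hom_\CF(R,S)$; and since $\varphi$ is injective, $V$ meets $1\times S$ trivially exactly when $(\varphi\times\id)(V)$ does, so $V$ is a graph precisely when its image is. Hence the only equations with a possibly nonzero side are those with $V=\Delta_R^\psi$ a graph, and a computation gives $(\varphi\times\id)(\Delta_R^\psi)=\Delta_{\varphi(R)}^{\psi\circ\varphi^{-1}}$, where $\varphi^{-1}$ denotes the inverse of $\varphi|_R$. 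Moreover $\psi\in\Hom_\CF$ if and only if $\psi\circ\varphi^{-1}\in\Hom_\CF$, so when neither lies in $\CF$ both fixed-point sets are empty and the equation is trivial. For $\psi\in\Hom_\CF(R,S)$, I apply the stated condition twice: once to $\psi$, giving $|X^{\Delta_R^\psi}|=|X^{\Delta_{\psi(R)}^{\id}}|$, and once to $\psi\circ\varphi^{-1}\in\Hom_\CF(\varphi(R),S)$, giving $|X^{\Delta_{\varphi(R)}^{\psi\varphi^{-1}}}|=|X^{\Delta_{\psi(R)}^{\id}}|$ since $(\psi\varphi^{-1})(\varphi(R))=\psi(R)$; comparing, the two sides agree. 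This recovers left stability, and the right case runs identically using $(\id\times\varphi)(\Delta_R^\psi)=\Delta_R^{\varphi\circ\psi}$ together with the right-hand family applied to $\psi$ and to $\varphi\circ\psi$.

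The main obstacle, and the place where condition \eqref{generation} is indispensable, is this reduction step: a priori Burnside's theorem demands checking every $V\le Q\times S$, and only the generation hypothesis (through Lemma~\ref{T:fixed points lemma}) guarantees that the non-graph subgroups and the graphs of non-$\CF$ maps contribute nothing on either side, so that the stated small family already controls everything. Once that is in place, the remaining content is the routine bookkeeping of how $\varphi\times\id$ and $\id\times\varphi$ act on diagonal subgroups.
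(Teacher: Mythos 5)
Your proof is correct and takes essentially the same route as the paper's: both reduce stability to fixed-point counts via Burnside, use condition \eqref{generation} together with Lemma~\ref{T:fixed points lemma} to restrict attention to graphs $\Delta_R^\psi$ of $\CF$-morphisms, identify $({}_{\varphi}X)^{\Delta_R^\psi}$ with $X^{\Delta_{\varphi(R)}^{\psi\circ\varphi^{-1}}}$, and recover the general equation by applying the stated special case to both $\psi$ and $\psi\circ\varphi^{-1}$ through the common value $|X^{\Delta_{\psi(R)}^{\id}}|$. Your explicit bookkeeping of non-graph subgroups and of graphs of non-$\CF$-morphisms is a slightly more careful rendering of a point the paper handles implicitly, but the argument is the same.
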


\begin{proof}
We consider the condition \eqref{stable}, the case for \eqref{rstable} being analogous.  We have ${}_{Q}X \cong {}_{\varphi}X$ if and only if $|({}_{Q}X)^H| = |({}_{\varphi}X)^H|$ for all $H\leq S\times S$.  Since $X$ satisfies the condition \eqref{generation}, Lemma~\ref{T:fixed points lemma} implies that both $({}_{Q}X)^H$ and $({}_{\varphi}X)^H$ will be empty unless $H = \Delta_R^\psi$ for some $R\leq Q$ and $\psi\in\Hom_\CF(R,S)$.  Now $({}_{Q}X)^{\Delta_R^\psi} = X^{\Delta_R^\psi}$, while $({}_{\varphi}X)^{\Delta_R^\psi} = X^{\Delta_{\varphi(R)}^{\psi\circ\varphi^{-1}}}$.  Thus ${}_{Q}X \cong {}_{\varphi}X$ if and only if $|X^{\Delta_R^\psi}| = |X^{\Delta_{\varphi(R)}^{\psi\circ\varphi^{-1}}}|$ for all $R\leq Q$, $\psi\in\Hom_\CF(R,S)$.  The latter condition implies in particular (when $R=Q$ and $\psi=\varphi$) $|X^{\Delta_Q^\varphi}| = |X^{\Delta_{\varphi(Q)}^{\id}}|$.  Thus the condition \eqref{stable} implies that $|X^{\Delta_{Q}^{\varphi}}|=|X^{\Delta_{\varphi(Q)}^{\id}}|$ for all $Q\leq S$, $\varphi\in
\Hom_\CF(Q,S)$.  Conversely, if $|X^{\Delta_Q^\varphi}| = |X^{\Delta_{\varphi(Q)}^{\id}}|$ for all $Q\leq S$, $\varphi\in
\Hom_\CF(Q,S)$, then the condition \eqref{stable} holds, because
\[
	|X^{\Delta_R^\psi}| = |X^{\Delta_{\psi(R)}^{\id}}| = |X^{\Delta_{\varphi(R)}^{\psi\circ\varphi^{-1}}}|
\]
for all $R\leq Q\leq S$, $\psi\in\Hom_\CF(R,S)$, $\varphi\in\Hom_\CF(Q,S)$.  
\end{proof}

We recall the formula for the number of fixed-points of transitive bisets.

\begin{lem}[{\cite[3.10]{RagnarssonStancuSFSasIdemp}}] \label{T:no of fixed pts}
Let $S$ be a finite group.  Let $Q, R\leq S$, and let $\varphi\colon Q\to S$, $\psi\colon R\to S$ be injective group homomorphisms.  Then
\[
	|((S\times S)/\Delta_{Q}^{\varphi})^{\Delta_{R}^{\psi}}| = \frac{|N_{\psi,\varphi}|}{|Q|}|C_S(\psi(R))|
\]
where 
\[
	N_{\psi,\varphi} = \{ x\in S\mid {}^{x}R \leq Q, \exists y\in S\colon \varphi\circ c_x|_R = c_y \circ \psi \}.
\]
\end{lem}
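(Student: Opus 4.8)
The plan is to reduce everything to the standard orbit-counting fact for coset spaces and then unwind the defining relations of the twisted diagonals. Recall that for a finite group $G$ with subgroups $H,K\leq G$, a coset $gH$ is fixed by $K$ precisely when $g^{-1}Kg\leq H$, and the set $\{g\in G : g^{-1}Kg\leq H\}$ is a union of right cosets of $H$, so that
\[
	|(G/H)^{K}| = \frac{1}{|H|}\,\bigl|\{ g\in G : g^{-1}Kg\leq H \}\bigr|.
\]
First I would apply this with $G=S\times S$, $H=\Delta_{Q}^{\varphi}$ (so $|H|=|Q|$), and $K=\Delta_{R}^{\psi}$, reducing the problem to counting the pairs $g=(a,b)\in S\times S$ with $(a,b)^{-1}\Delta_{R}^{\psi}(a,b)\leq\Delta_{Q}^{\varphi}$.

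Next I would make this containment explicit. Letting $w$ range over $R$ and computing $(a,b)^{-1}(w,\psi(w))(a,b)=(a^{-1}wa,\,b^{-1}\psi(w)b)$, membership in $\Delta_{Q}^{\varphi}$ amounts to the two requirements $a^{-1}wa\in Q$ and $b^{-1}\psi(w)b=\varphi(a^{-1}wa)$ for every $w\in R$. The key translation is then to set $x=a^{-1}$ and $y=b^{-1}$: the first requirement becomes ${}^{x}R\leq Q$, and the second becomes $\varphi\circ c_x|_R = c_y\circ\psi$, which together are exactly the conditions defining $N_{\psi,\varphi}$, with $y$ witnessing the existential clause. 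The count of admissible pairs now factors. Since $a\mapsto a^{-1}$ is a bijection and the condition on $a$ alone is the existence of some compatible $y$, the number of admissible first coordinates $a$ equals $|N_{\psi,\varphi}|$. For each fixed admissible $a$ I would count the admissible $b$: if $b_0$ is one solution, then $b$ is another solution iff $b_0 b^{-1}$ commutes with every $\psi(w)$, i.e. $b_0 b^{-1}\in C_S(\psi(R))$, so the admissible $b$ form a single coset of $C_S(\psi(R))$ and thus number $|C_S(\psi(R))|$. Multiplying gives $|\{g:g^{-1}Kg\leq H\}|=|N_{\psi,\varphi}|\cdot|C_S(\psi(R))|$, and dividing by $|H|=|Q|$ yields the stated formula.

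The only genuinely delicate step, and the one I would be most careful with, is disentangling the existential clause in the definition of $N_{\psi,\varphi}$: that set records for which $x$ \emph{some} compatible $y$ exists, while the factor $|C_S(\psi(R))|$ separately counts \emph{how many} such $y$ (equivalently $b$) arise once one does. Keeping the conjugation convention ${}^{x}R=xRx^{-1}$ consistent under the substitution $x=a^{-1}$, $y=b^{-1}$ is the other place demanding attention; beyond that, the argument is a direct and routine calculation.
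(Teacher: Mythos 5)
Your argument is correct: the reduction to $|(G/H)^K|=\frac{1}{|H|}|\{g : g^{-1}Kg\leq H\}|$, the substitution $x=a^{-1}$, $y=b^{-1}$ matching the conventions ${}^xR=xRx^{-1}$ and $c_x(u)=xux^{-1}$, and the count of the fibre over each admissible $x$ as a coset of $C_S(\psi(R))$ all check out. The paper itself gives no proof of this lemma --- it is quoted directly from Ragnarsson--Stancu [3.10] --- so there is nothing internal to compare against; your derivation is the standard one underlying that reference.
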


Note that $N_{\psi,\varphi} = \emptyset$ unless $\psi \precsim_{(S,S)} \varphi$.  Also, $N_{\varphi,\varphi}=N_\varphi$, the largest subgroup of $N_S(Q)$ to which $\varphi$ can be extended.  In the special case where $Q=S$, Lemma~\ref{T:no of fixed pts} takes the following form.

\begin{lem} \label{T:no of fixed pts on top piece}
Let $S$ be a finite group.  Let $R\leq S$, and let $\alpha\colon S\to S$, $\psi\colon R\to S$ be injective group homomorphisms.  Then
\[
	|((S\times S)/\Delta_{S}^{\alpha})^{\Delta_{R}^{\psi}}| =
	\begin{cases}
		|C_S(\psi(R)|,	&\text{if $\psi \precsim_{(S,S)} \alpha$},\\
		0,				&\text{otherwise}.
	\end{cases}
\] 
\end{lem}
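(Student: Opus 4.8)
The plan is to derive Lemma~\ref{T:no of fixed pts on top piece} as the special case $Q=S$ of the general fixed-point formula in Lemma~\ref{T:no of fixed pts}, so the work is essentially bookkeeping: substitute $Q=S$ and $\varphi=\alpha$ into the formula and simplify the combinatorial set $N_{\psi,\alpha}$. First I would recall that Lemma~\ref{T:no of fixed pts} gives
\[
	|((S\times S)/\Delta_{S}^{\alpha})^{\Delta_{R}^{\psi}}| = \frac{|N_{\psi,\alpha}|}{|S|}|C_S(\psi(R))|,
\]
where $N_{\psi,\alpha} = \{ x\in S\mid {}^{x}R \leq S,\ \exists y\in S\colon \alpha\circ c_x|_R = c_y \circ \psi \}$. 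The point is that when $Q=S$ the containment condition ${}^{x}R\leq S$ is automatic for every $x\in S$, so that constraint drops out entirely, leaving only the fusion-theoretic condition $\alpha\circ c_x|_R = c_y\circ\psi$ for some $y\in S$.

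The key step is then to show that $N_{\psi,\alpha}$ is either empty or all of $S$, and to identify which case occurs with the subconjugacy condition $\psi\precsim_{(S,S)}\alpha$. For the dichotomy, I would argue that $N_{\psi,\alpha}$ is a union of cosets of a suitable subgroup, or more directly that it is right-invariant under multiplication by an appropriate group; since the containment constraint is vacuous, the set $N_{\psi,\alpha}$ consists of all $x\in S$ for which $\alpha c_x|_R$ is $S$-conjugate to $\psi$, and if any single such $x$ exists then every element of $S$ works after adjusting $y$. Concretely, if $x_0\in N_{\psi,\alpha}$ witnessed by $y_0$, then for arbitrary $x\in S$ one has $\alpha c_x|_R = \alpha c_{xx_0^{-1}} c_{x_0}|_R = c_{\alpha(xx_0^{-1})}\,\alpha c_{x_0}|_R = c_{\alpha(xx_0^{-1})y_0}\psi$, so $x\in N_{\psi,\alpha}$ as well. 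Hence $N_{\psi,\alpha}\neq\emptyset$ forces $N_{\psi,\alpha}=S$, giving $|N_{\psi,\alpha}|/|S|=1$ and the value $|C_S(\psi(R))|$; otherwise $N_{\psi,\alpha}=\emptyset$ and the value is $0$.

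Finally I would match the nonempty condition to $\psi\precsim_{(S,S)}\alpha$. By the definition of $S$-$S$-subconjugacy recalled in the excerpt, $\psi\precsim_{(S,S)}\alpha$ means there exist $x\in S$, $y\in S$ with ${}^{x}R\leq S$ (automatic) and $\alpha\circ c_x|_R = c_y\circ\psi$, which is exactly the condition that $N_{\psi,\alpha}$ is nonempty. This completes the identification of the two cases and yields the stated formula. I expect no serious obstacle here: the only point requiring a little care is verifying the right-invariance computation above, using that $\alpha$ is a homomorphism so that $\alpha\circ c_g = c_{\alpha(g)}\circ\alpha$ for $g\in S$, and confirming that the composite of inner maps is again realized by a single inner map $c_{\alpha(xx_0^{-1})y_0}$. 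Once that identity is in hand, the rest is immediate from Lemma~\ref{T:no of fixed pts}.
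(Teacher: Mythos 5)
Your proposal is correct and follows essentially the same route as the paper: specialize Lemma~\ref{T:no of fixed pts} to $Q=S$, and use the identity $\alpha\circ c_x = c_{\alpha(x)}\circ\alpha$ to see that $N_{\psi,\alpha}$ is either empty or all of $S$, the nonempty case being exactly $\psi\precsim_{(S,S)}\alpha$. Your explicit verification that a single witness $x_0$ forces $N_{\psi,\alpha}=S$ is just a spelled-out version of the paper's one-line argument.
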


\begin{proof}
By Lemma~\ref{T:no of fixed pts}, $((S\times S)/\Delta_{S}^{\alpha})^{\Delta_{R}^{\psi}} \neq\emptyset$ if and only if $N_{\psi,\alpha}\neq\emptyset$, i.e.\ if there are $x,y\in S$ such that $\alpha\circ c_x|_{R} = c_y\circ\psi$.  Since $\alpha\circ c_x = c_{\alpha(x)}\circ\alpha$, we have $N_{\psi,\alpha}\neq\emptyset$ if and only if $N_{\psi,\alpha}=S$.
\end{proof}

\section{Minimal characteristic bisets up to index $p$ subgroups} \label{S:min biset up to index p}

In this section, we determine the coefficients of the top two layers $X_0$ and $X_1$ of a left or right characteristic biset $X$ for a saturated fusion system $\CF$.  It is well-known that $X_0$ has the following form.

\begin{prp} \label{T:layer0}
Let $X$ be a left or right characteristic biset for a saturated fusion system $\CF$ on a finite $p$-group $S$. Then there is a positive integer $c_0$ which is not divisible by $p$ such that
\[
	X_0 \cong c_0 \coprod_{[\alpha]\in\Out_\CF(S)} (S\times S)/\Delta_{S}^{\alpha}.
\]
\end{prp}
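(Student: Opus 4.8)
The plan is to analyze the top layer $X_0$ by exploiting the two complementary descriptions of bisets developed in the preceding sections: the transitive-orbit decomposition together with the fixed-point count of Burnside, and the explicit fixed-point formula of Lemma~\ref{T:no of fixed pts on top piece}. First I would observe that $X_0$ consists precisely of the transitive subbisets $S\times_{(Q,\varphi)}S$ with $|S:Q|=p^0$, i.e.\ with $Q=S$; by the generation condition~\eqref{generation} each such $\varphi$ lies in $\Hom_\CF(S,S)$, and since $\varphi$ is injective on the finite group $S$ it is an automorphism, so $\varphi\in\Aut_\CF(S)$. Using the isomorphism criterion from \S\ref{S:char biset}, two such orbits $S\times_{(S,\alpha)}S$ and $S\times_{(S,\beta)}S$ are isomorphic exactly when $\alpha\sim_{(S,S)}\beta$, and one checks that for $Q=R=S$ this $S$-$S$-conjugacy relation collapses to equality of classes in $\Out_\CF(S)=\Aut_\CF(S)/\Inn(S)$. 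Hence the distinct transitive constituents of $X_0$ are indexed by $[\alpha]\in\Out_\CF(S)$, and I can write
\[
	X_0 \cong \coprod_{[\alpha]\in\Out_\CF(S)} c_\alpha\,(S\times S)/\Delta_S^\alpha
\]
for some nonnegative integer multiplicities $c_\alpha$.

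The crux is then to show that all the multiplicities $c_\alpha$ coincide with a single value $c_0$. For this I would apply the stability condition together with the fixed-point count. By Lemma~\ref{T:fixed points lemma}(2), the fixed points of $X$ under $\Delta_S^\beta$ (an index-$p^0$ subgroup) are detected entirely in the top layer, so $|X^{\Delta_S^\beta}|=|X_0^{\Delta_S^\beta}|$. Now Lemma~\ref{T:no of fixed pts on top piece}, applied with $R=S$ and $\psi=\beta$, gives that $((S\times S)/\Delta_S^\alpha)^{\Delta_S^\beta}$ is nonempty exactly when $\beta\precsim_{(S,S)}\alpha$, which for full automorphisms means $[\beta]=[\alpha]$, and in that case the count is $|C_S(\beta(S))|=|Z(S)|$. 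Therefore
\[
	|X_0^{\Delta_S^\beta}| = c_\beta\,|Z(S)|.
\]
On the other hand, the left-stability condition, rephrased through Lemma~\ref{T:stability by fixed-pts} as $|X^{\Delta_Q^\varphi}|=|X^{\Delta_{\varphi(Q)}^{\id}}|$, specializes at $Q=S$, $\varphi=\beta$ to $|X^{\Delta_S^\beta}|=|X^{\Delta_S^{\id}}|$; comparing the two sides via the fixed-point formula yields $c_\beta|Z(S)|=c_{\id}|Z(S)|$, whence $c_\beta=c_{\id}=:c_0$ for every $[\beta]\in\Out_\CF(S)$. (For a right characteristic biset one uses the other equivalent condition from Lemma~\ref{T:stability by fixed-pts} and the analogous specialization; the opposite operation of \S\ref{S:char biset} also reduces this case to the left one.)

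It remains to verify that $c_0$ is a positive integer prime to $p$. Positivity follows from condition~\eqref{generation} forcing $X$ to contain at least the identity orbit, so $c_0\geq 1$. For the coprimality, I would compute $e(X)=|X|/|S|$ modulo $p$: each orbit $(S\times S)/\Delta_S^\alpha$ contributes $|S|$ to $|X|$, so the top layer contributes $c_0\,|\Out_\CF(S)|$ to $e(X)$, while every lower layer $X_r$ with $r\geq 1$ is built from orbits $(S\times S)/\Delta_Q^\varphi$ with $|S:Q|=p^r$, each of size $|S|\cdot p^r$, contributing a multiple of $p$ to $e(X)$. Thus $e(X)\equiv c_0\,|\Out_\CF(S)|\pmod p$, and since $|\Out_\CF(S)|$ divides $|\Aut(S)|$ with the relevant Sylow structure making it prime to $p$ (indeed $\Out_\CF(S)$ has order prime to $p$ because $\Inn(S)$ is a normal $p$-subgroup of the $p'$-by-$p$ automorphism data, and more directly $\Out_\CF(S)$ is a $p'$-group by saturation), the condition~\eqref{coprime} that $e(X)\not\equiv 0\pmod p$ forces $c_0\not\equiv 0\pmod p$. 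The step I expect to be the main obstacle is the clean reduction of the $S$-$S$-conjugacy relation to the outer automorphism group and the careful bookkeeping showing that no lower layer $X_r$ can contribute fixed points under $\Delta_S^\beta$; the former requires checking that centralizer-conjugation by elements of $S$ is exactly what passing to $\Out_\CF(S)$ quotients out, and the latter is precisely the content of Lemma~\ref{T:fixed points lemma}(2), so both are already in hand and the argument should go through without serious difficulty.
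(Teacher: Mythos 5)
Your proposal is correct and follows essentially the same route as the paper: decompose $X_0$ into transitive pieces indexed by $\Out_\CF(S)$, use Lemma~\ref{T:stability by fixed-pts} together with the fixed-point counts of Lemmas~\ref{T:fixed points lemma} and~\ref{T:no of fixed pts on top piece} to equalize the multiplicities, and then invoke condition~\eqref{coprime} to get $p\nmid c_0$. One small slip: positivity of $c_0$ does not follow from condition~\eqref{generation}, which only restricts which orbits \emph{may} occur rather than forcing the identity orbit to appear; but this is harmless since your mod-$p$ computation already yields $c_0\not\equiv 0\pmod p$ and hence $c_0\geq 1$, exactly as in the paper.
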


\begin{proof}
Write $X_0 \cong \coprod_{[\alpha]\in\Out_\CF(S)} c_\alpha (S\times S)/\Delta_{S}^{\alpha}$ where the $c_\alpha
$ are nonnegative integers. We have $|X|/|S| \equiv |X_0|/|S| \equiv \sum_{\alpha}c_\alpha \mod p$.  Thus 
the condition \eqref{coprime} is equivalent to that $\sum_{\alpha}c_\alpha \not\equiv 0\mod p$.  In particular, there is $
\beta\in\Aut_\CF(S)$ such that $c_\beta>0$.  Lemma~\ref{T:stability by fixed-pts} implies that $|X^{\Delta_{S}^{\alpha}}|=|
X^{\Delta_{S}^{\beta}}|$ for every $\alpha\in\Aut_\CF(S)$.  But $|X^{\Delta_{S}^{\alpha}}|=|
X_0^{\Delta_{S}^{\alpha}}|=c_\alpha|Z(S)|$ by Lemma~\ref{T:fixed points lemma} and Lemma~\ref{T:no of fixed pts}.  Thus we have $c_\alpha=c_\beta =: c_0$ for every $\beta\in\Aut_\CF(S)$.  Then $|X|/|S| \equiv \sum_{\alpha}c_\alpha \equiv c_0|\Out_\CF(S)| \not\equiv 0 \mod p$.  It follows that $c_0\not\equiv 0\mod p$.
\end{proof}

To determine $X_1$, we need the following definitions.

\begin{dfn}
Let $\CF$ be a fusion system on a finite $p$-group $S$.  Let $Q\leq R\leq S$ and let $\varphi\in\Aut_\CF(Q,S)$.  We say that $\varphi$ is {\em extendable (in $\CF$) to 
$R$} if there is a morphism $\psi\in\Hom_\CF(R,S)$ such that $\varphi=\psi|_Q$.  We say that $\varphi$ is 
{\em extendable} if it is extendable to some subgroup of $S$ properly containing $Q$; otherwise, we say 
that $\varphi$ is {\em nonextendable}. Let
\[
	\Hom_\CF^{e,R}(Q,S) = \{ \varphi\in\Hom_\CF(Q,S) \mid \text{ $\varphi$ is extendable to $R$} \},
\]
\[
	\Hom_\CF^{e}(Q,S) = \{ \varphi\in\Hom_\CF(Q,S) \mid \text{ $\varphi$ is extendable } \},
\]
\[
\Hom_\CF^{n}(Q,S) = \{ \varphi\in\Hom_\CF(Q,S) \mid\text{ $\varphi$ is nonextendable } \}.
\]
\end{dfn}

Note that $\Hom_\CF^{e}(Q,S)=\bigcup_{Q<R\leq S} \Hom_\CF^{e,R}(Q,S)$.  Also, if $|S:Q|=p$, then $\Hom_
\CF^{e,S}(Q,S)=\Hom_\CF^{e}(Q,S)$.

The following proposition is implicit in \cite[5.5]{RagnarssonStancuSFSasIdemp}.

\begin{prp}
Let $X$ be a left or right characteristic biset for a saturated fusion system $\CF$ on a finite $p$-group $S$.  Let $\varphi\in\Hom_\CF^{n}(Q,S)$ for some $Q\leq S$.  Then $X$ has a transitive subbiset isomorphic to $S\times_{(Q,\varphi)} S$.
\end{prp}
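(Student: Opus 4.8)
The plan is to show that a nonextendable morphism $\varphi\in\Hom_\CF^n(Q,S)$ forces a transitive subbiset $S\times_{(Q,\varphi)}S$ to appear in $X$ by detecting it via a fixed-point count. By Lemma~\ref{T:fixed points lemma}(2), the appearance of such a subbiset is governed by the behaviour of $|X^{\Delta_Q^\psi}|$ for $\psi$ in the $S$-$S$-conjugacy class of $\varphi$, and the key point will be that because $\varphi$ cannot be extended to any larger subgroup, the transitive pieces of $X$ in layers $X_0,\dots,X_{r-1}$ (where $|S:Q|=p^r$) cannot ``account for'' the fixed points that stability demands at $\Delta_Q^\varphi$. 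So the strategy is to compute $|X^{\Delta_Q^\varphi}|$ two ways and compare.

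First I would record, using Lemma~\ref{T:stability by fixed-pts}, the numerical consequence of $\CF$-stability: for a left characteristic biset, $|X^{\Delta_Q^\varphi}| = |X^{\Delta_{\varphi(Q)}^{\id}}|$, and the right-hand side is a quantity depending only on the subgroup $\varphi(Q)$ and $\CF$; in particular it equals $|X^{\Delta_Q^{\id}}|$ after replacing $Q$ by $\varphi(Q)$ (for the right-characteristic case one uses the analogous equality). The decisive step is to express $|X^{\Delta_Q^\varphi}|$ as a sum over the transitive subbisets of $X$ using Lemma~\ref{T:no of fixed pts}: each transitive piece $S\times_{(R,\rho)}S$ contributes $\frac{|N_{\varphi,\rho}|}{|R|}|C_S(\varphi(Q))|$, and this contribution is nonzero precisely when $\varphi\precsim_{(S,S)}\rho$, i.e.\ $\varphi$ is $S$-$S$-subconjugate to $\rho$. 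The plan is then to isolate the contribution of pieces with $R$ in the same $S$-$S$-conjugacy class as $Q$ (``diagonal'' pieces, which are genuine copies of $S\times_{(Q,\varphi)}S$) from the contribution of pieces with $|S:R|<p^r$ (``larger-image'' pieces, coming from layers $X_{\leq r-1}$).

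The heart of the argument is the nonextendability hypothesis. I would argue that if $\varphi$ is $S$-$S$-subconjugate to some $\rho$ defined on a strictly larger subgroup $R$ with ${}^xQ<R$, then a suitable restriction/conjugate of $\rho$ would extend $\varphi$ in $\CF$ — contradicting $\varphi\in\Hom_\CF^n(Q,S)$. Hence no transitive piece of $X$ lying in a strictly higher layer can have $\varphi$ subconjugate into it, so the only contributions to $|X^{\Delta_Q^\varphi}|$ come from pieces in layer $X_r$ itself whose defining morphism is $S$-$S$-conjugate to $\varphi$. Comparing this with the stability equality $|X^{\Delta_Q^\varphi}|=|X^{\Delta_{\varphi(Q)}^{\id}}|$, and noting that the right-hand side is strictly positive (since $X$ always contains the identity-type pieces forced by Proposition~\ref{T:layer0} and its analogues at lower layers, or more directly because $X^{\Delta_{\varphi(Q)}^{\id}}\supseteq X_0^{\Delta_{\varphi(Q)}^{\id}}\neq\emptyset$), I conclude that the coefficient of $S\times_{(Q,\varphi)}S$ in $X$ must be positive. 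Therefore $X$ contains a transitive subbiset isomorphic to $S\times_{(Q,\varphi)}S$, as claimed.

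The main obstacle I anticipate is the nonextendability step: translating the purely combinatorial statement ``$\varphi$ is $S$-$S$-subconjugate to $\rho$ with strictly larger domain'' into the fusion-theoretic statement ``$\varphi$ extends in $\CF$''. One must check carefully that the subconjugacy relation $\varphi\precsim_{(S,S)}\rho$, unwound via the condition $\rho\circ c_x|_{{}^xQ}=c_y\circ\varphi$, really produces a morphism in $\Hom_\CF(R',S)$ restricting to (a conjugate of) $\varphi$ on a subgroup strictly containing $Q$, using that $\CF$-morphisms are closed under restriction, composition with conjugation maps $c_x,c_y\in\Inn(S)$, and that $S$-$S$-conjugation preserves the relevant domains and codomains. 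Keeping the left-versus-right characteristic distinction straight throughout, and making sure the centralizer factor $|C_S(\varphi(Q))|$ is handled consistently on both sides of the comparison, is where the bookkeeping will require care.
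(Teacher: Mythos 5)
Your proposal is correct and follows essentially the same route as the paper: use the stability equality $|X^{\Delta_Q^\varphi}|=|X^{\Delta_{\varphi(Q)}^{\id}}|$ from Lemma~\ref{T:stability by fixed-pts}, observe the right-hand side is positive because of the identity piece in $X_0$ (Proposition~\ref{T:layer0} with Lemma~\ref{T:no of fixed pts on top piece}), and then use nonextendability of $\varphi$ together with Lemma~\ref{T:fixed points lemma} to conclude that the transitive piece witnessing the nonempty fixed-point set must be isomorphic to $S\times_{(Q,\varphi)}S$. The extra bookkeeping you anticipate (summing contributions layer by layer) is not needed once one notes that any piece $S\times_{(R,\rho)}S$ with $\varphi\precsim_{(S,S)}\rho$ and ${}^xQ<R$ would yield the $\CF$-morphism $c_y^{-1}\circ\rho\circ c_x$ extending $\varphi$, exactly as you sketch.
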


\begin{proof}
Assume that $X$ is a left characteristic biset for $\CF$; the proof is identical when $X$ is a right characteristic biset.  By Lemma~\ref{T:stability by fixed-pts}, we have $|X^{\Delta_Q^\varphi}|=|X^{\Delta_{\varphi(Q)}^\id}|$.  By Proposition~\ref{T:layer0} and Lemma~\ref{T:no of fixed pts on top piece}, we have $|X^{\Delta_{\varphi(Q)}^\id}| \geq |((S\times S)/\Delta_S^{\id})^{\Delta_{\varphi(Q)}^\id}| > 0$.  Thus we have $|X^{\Delta_Q^\varphi}| \neq 0$.  Lemma~\ref{T:fixed points lemma} and the assumption that $\varphi$ is nonextendable,  it follows that $X$ has a transitive subbiset isomorphic to $S\times_{(Q,\varphi)}S$.
\end{proof}

We denote by $\Hom_\CF(Q,S)/\sim_{(S,S)}$ the set of $\sim_{(S,S)}$-equivalence classes of 
elements of $\Hom_\CF(Q,S)$.  For instance, we have $\Out_\CF(S)=\Aut_\CF(S)/\sim_{(S,S)}$.  The following proposition is based on an important property of $\CF$-centric subgroups which can be found in \cite[A.8]{BrotoLeviOliver2003b}.  Both the proposition and the proof are crucial for many results in this paper.

\begin{prp} \label{restriction isom}
Let $\CF$ be a saturated fusion system on a finite $p$-group $S$.  Let $Q\leq R\leq S$ be normal subgroups of $S$ which are $\CF$-centric.  Then restriction to $Q$ induces a bijection
\[
	\Hom_\CF(R,S)/\sim_{(S,S)} \xrightarrow{\cong} \Hom_{\CF}^{e,R}(Q,S)/\sim_{(S,S)}.
\]
In particular, if $|S:Q|=p$, then we have
\[
	\Out_\CF(S) \xrightarrow{\cong} \Hom_{\CF}^{e}(Q,S)/\sim_{(S,S)}
\]
\end{prp}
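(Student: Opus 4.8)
The plan is to construct a candidate inverse to restriction and then check it is well-defined in both directions, leveraging the hypothesis that $Q$ and $R$ are normal and $\CF$-centric. First I would define a map on representatives. Given $\varphi \in \Hom_\CF^{e,R}(Q,S)$, by definition there is some extension $\tilde\varphi \in \Hom_\CF(R,S)$ with $\tilde\varphi|_Q = \varphi$; I would send $[\varphi]$ to $[\tilde\varphi]$. The heart of the matter is that this assignment does not depend on the choice of extension, and for this the normality and $\CF$-centricity of $Q$ and $R$ are essential. The key tool is the property recorded in \cite[A.8]{BrotoLeviOliver2003b}: for an $\CF$-centric normal subgroup, a morphism is essentially determined by its restriction to that subgroup, so two extensions of the same $\varphi$ must agree up to $S$-$S$-conjugacy. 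Concretely, if $\tilde\varphi_1, \tilde\varphi_2 \in \Hom_\CF(R,S)$ both restrict to $\varphi$ on $Q$, then $\tilde\varphi_2 \circ \tilde\varphi_1^{-1}$ is an $\CF$-automorphism of $\tilde\varphi_1(R)$ that is the identity on $\tilde\varphi_1(Q)$, and the centricity forces this to be realized by conjugation by an element of $S$, giving $\tilde\varphi_1 \sim_{(S,S)} \tilde\varphi_2$.

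Next I would verify that the map is well-defined on $\sim_{(S,S)}$-classes, i.e.\ that $\CF$-conjugate choices of $\varphi$ (and correspondingly of the extensions) produce $\CF$-conjugate outputs; since the $S$-$S$-conjugation relation is preserved under restriction and extension by normality of the subgroups, this should be a direct bookkeeping check of the defining relations $\psi \circ c_x|_Q = c_y \circ \varphi$. The composite of this map with restriction is visibly the identity in the direction $\Hom_\CF(R,S)/\!\sim \to \Hom_\CF^{e,R}(Q,S)/\!\sim \to \Hom_\CF(R,S)/\!\sim$, since $[\tilde\varphi] \mapsto [\tilde\varphi|_Q] \mapsto [\widetilde{\tilde\varphi|_Q}] = [\tilde\varphi]$ by the well-definedness just established. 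In the other direction, restriction of the chosen extension returns $\varphi$ on the nose, so the composite is the identity on $\Hom_\CF^{e,R}(Q,S)/\!\sim$. This gives the claimed bijection.

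For the final assertion, I would specialize to $|S:Q| = p$. Taking $R = S$ makes $R$ automatically normal and $\CF$-centric (as $\CF$ is saturated and $S$ is its own Sylow subgroup), and I would use the earlier observation that $\Hom_\CF^{e,S}(Q,S) = \Hom_\CF^e(Q,S)$ when $|S:Q| = p$, together with $\Out_\CF(S) = \Aut_\CF(S)/\!\sim_{(S,S)} = \Hom_\CF(S,S)/\!\sim_{(S,S)}$. Substituting $R = S$ into the general bijection then yields $\Out_\CF(S) \xrightarrow{\cong} \Hom_\CF^e(Q,S)/\!\sim_{(S,S)}$ directly.

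The main obstacle I anticipate is the well-definedness of the inverse map, specifically extracting from \cite[A.8]{BrotoLeviOliver2003b} the precise statement that an $\CF$-centric normal subgroup's morphisms are rigid enough that two extensions of the same restriction differ by an inner automorphism of $S$. Everything else is formal manipulation of the conjugacy relations, but that rigidity step is where the hypotheses genuinely get used and where one must be careful that the centralizer conditions built into $\CF$-centricity supply exactly the element of $S$ needed to witness $S$-$S$-conjugacy rather than merely abstract $\CF$-conjugacy.
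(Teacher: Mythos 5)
Your argument is correct and rests on the same key input as the paper's own proof, namely \cite[A.8]{BrotoLeviOliver2003b} applied via the normality and $\CF$-centricity of $Q$; the paper merely packages the rigidity step as injectivity of the restriction map rather than as well-definedness of an explicit inverse, which amounts to the same computation. (One small imprecision: $\tilde\varphi_2\circ\tilde\varphi_1^{-1}$ is a priori only an $\CF$-morphism $\tilde\varphi_1(R)\to S$ restricting to the identity on $\tilde\varphi_1(Q)$, not necessarily an automorphism of $\tilde\varphi_1(R)$, but that is exactly the situation A.8 covers, so nothing is lost.)
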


\begin{proof}
The given map is well-defined because $Q\unlhd S$.  It is clearly surjective.  To show that it is injective, let $\varphi, \psi \in \Hom_\CF(R,S)$ and suppose $\varphi|_Q \sim_{(S,S)} \psi|_Q$.  That means there exist $x \in N_S(Q)$ and $y\in S$ such that $c_y\circ\psi|_Q=\varphi\circ c_x|_Q$.  Since $Q$ is $\CF$-centric, we apply \cite[A.8]{BrotoLeviOliver2003b} to $c_y\circ\psi|_R$ and $\varphi\circ c_x|_R$ (the latter is well-defined because $R\unlhd S$) and get $z\in Z(Q)$ such that $c_y\circ\psi=\varphi\circ c_x\circ c_z|_R$.  In other words, $\varphi \sim_{(S,S)} \psi$, showing that the given map is also injective.
\end{proof}

Now we determine $X_1$ of a characteristic biset $X$ for a saturated fusion system, which amounts to solving a part of the system of linear equations given in \cite[5.6]{Ragnarsson2006}.

\begin{prp} \label{T:layer1}
Let $\CF$ be a saturated fusion system on a finite $p$-group $S$.  Let $\{ P_i \mid 1\leq i \leq n \}$ be the set of subgroups of $S$ of index $p$.  For each $1\leq i\leq n$, let $\{ \varphi_{i,j}\colon 
P_i \to S \mid 0\leq j\leq m_i \}$ be a set of representatives of $\sim_{(S,S)}$-equivalence classes of $\CF$-morphisms with domain $P_i$ such that $\varphi_{i,0}=\iota_{P_i}$, the inclusion map $P_i\hookrightarrow S$.  Let $X$ be a finite $S$-$S$-biset with
\[
	X_0 \cong c_0 \coprod_{[\alpha]\in\Out_\CF(S)} (S\times S)/\Delta_{S}^{\alpha}
\]
for some positive integer $c_0$ which is not divisible by $p$.  
\begin{enumerate}
\item If $X$ is a right characteristic biset for $\CF$, then there is an integer $c_1^{(i)}\geq 0$ for each $1\leq i\leq n$ such that
\[
	X_1 \cong \coprod_{\substack{ 1\leq i, j\leq n \\ \varphi_{i,j} \text{ extendable}}} c_1^{(i)} (S\times S)/\Delta_{P_i}^{\varphi_{i,j}} \amalg 
		\coprod_{\substack{ 1\leq i, j\leq n \\ \varphi_{i,j} \text{ nonextendable}}}(c_0+pc_1^{(i)})(S\times S)/\Delta_{P_i}^{\varphi_{i,j}}.
\]
\item If $X$ is a left characteristic biset for $\CF$, then there is an integer $c_1^{(i)}\geq 0$ for each $1\leq i\leq n$ such that
\[
	X_1 \cong \coprod_{\substack{ 1\leq i, j\leq n \\ \varphi_{i,j} \text{ extendable}}} c_1^{(i)} (S\times S)/\Delta_{\varphi_{i,j}(P_i)}^{\varphi_{i,j}^{-1}} \amalg 
		\coprod_{\substack{ 1\leq i, j\leq n \\ \varphi_{i,j} \text{ nonextendable}}}(c_0+pc_1^{(i)}) (S\times S)/\Delta_{\varphi_{i,j}(P_i)}^{\varphi_{i,j}^{-1}}.
\]
\end{enumerate} 
\end{prp}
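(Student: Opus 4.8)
The plan is to prove part (1) directly and obtain part (2) from it by the opposite operation. Write the unknown first layer as $X_1\cong\coprod_{i,j}a_{i,j}\,(S\times S)/\Delta_{P_i}^{\varphi_{i,j}}$ with integers $a_{i,j}\ge 0$, and pin the $a_{i,j}$ down using right stability. By Lemma~\ref{T:stability by fixed-pts}, right $\CF$-stability says exactly that $|X^{\Delta_{P_i}^{\varphi_{i,j}}}| = |X^{\Delta_{P_i}^{\iota_{P_i}}}|$ for all $i,j$, and since $|S:P_i|=p$, Lemma~\ref{T:fixed points lemma}(2) shows that only the layers $X_0$ and $X_1$ contribute to these counts. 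So the entire argument reduces to evaluating $|X_0^{\Delta_{P_i}^{\varphi_{i,j}}}|$ and $|X_1^{\Delta_{P_i}^{\varphi_{i,j}}}|$ via Lemma~\ref{T:no of fixed pts} and Lemma~\ref{T:no of fixed pts on top piece}, and solving the resulting linear relations.

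Next I would carry out three structural reductions. First, each $P_i$ is a maximal, hence normal, subgroup of $S$, so distinct $P_i$ are not $S$-conjugate; consequently the set $N_{\varphi_{i,j},\varphi_{i',j'}}$ of Lemma~\ref{T:no of fixed pts} is empty unless $i'=i$, and among the domain-$P_i$ pieces only the one in the class of $\varphi_{i,j}$ survives. This yields $|X_1^{\Delta_{P_i}^{\varphi_{i,j}}}| = a_{i,j}\,(|N_{\varphi_{i,j}}|/|P_i|)\,|C_S(\varphi_{i,j}(P_i))|$, where $|N_{\varphi_{i,j}}|/|P_i|$ equals $p$ if $\varphi_{i,j}$ is extendable and $1$ if it is nonextendable, because $N_{\varphi_{i,j}}$ is the largest subgroup of $N_S(P_i)=S$ to which $\varphi_{i,j}$ extends and there is no subgroup strictly between $P_i$ and $S$. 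Second, since $P_i\unlhd S$ it is fully normalized, hence fully centralized by saturation, and the same holds for its $\CF$-conjugate image $\varphi_{i,j}(P_i)$, which is again a normal maximal subgroup; therefore $|C_S(\varphi_{i,j}(P_i))|=|C_S(P_i)|$ for every $j$, and this common factor divides out of every equation. Third, by Lemma~\ref{T:no of fixed pts on top piece} together with the shape of $X_0$ from Proposition~\ref{T:layer0}, $|X_0^{\Delta_{P_i}^{\varphi_{i,j}}}| = c_0\,d_{i,j}\,|C_S(P_i)|$, where $d_{i,j}$ is the number of classes $[\alpha]\in\Out_\CF(S)$ with $\varphi_{i,j}\precsim_{(S,S)}\alpha$; this is $0$ when $\varphi_{i,j}$ is nonextendable, and for extendable $\varphi_{i,j}$ it equals the number of $\Out_\CF(S)$-classes restricting to $[\varphi_{i,j}]$.

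After cancelling $|C_S(P_i)|$ the equations become $c_0 d_{i,j}+p\,a_{i,j}=R_i$ for extendable $\varphi_{i,j}$ and $a_{i,j}=R_i$ for nonextendable $\varphi_{i,j}$, where $R_i:=|X^{\Delta_{P_i}^{\iota_{P_i}}}|/|C_S(P_i)|$ is independent of $j$. One checks that the fibers of restriction $[\alpha]\mapsto[\alpha|_{P_i}]$ are cosets of a fixed subgroup of $\Out_\CF(S)$, so $d_{i,j}$ is already constant over the extendable $\varphi_{i,j}$; Proposition~\ref{restriction isom}, giving the bijection $\Out_\CF(S)\xrightarrow{\cong}\Hom_\CF^{e}(P_i,S)/\sim_{(S,S)}$, then forces the exact value $d_{i,j}=1$ (in particular for $\iota_{P_i}=\varphi_{i,0}$). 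Feeding $d_{i,j}=1$ into the extendable equations makes $p\,a_{i,j}=R_i-c_0$ independent of $j$, so all extendable pieces with domain $P_i$ share one multiplicity, which we name $c_1^{(i)}$; this also gives $R_i=c_0+p\,c_1^{(i)}$, whence the nonextendable equation produces $a_{i,j}=c_0+p\,c_1^{(i)}$. That is precisely the asserted form of $X_1$. Finally, part (2) follows by applying part (1) to the right characteristic biset $X^{\op}$ and translating back through $(S\times_{(Q,\varphi)}S)^{\op}\cong S\times_{(\varphi(Q),\varphi^{-1})}S$, using that $\varphi_{i,j}$ is extendable if and only if $\varphi_{i,j}^{-1}$ is.

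I expect the main obstacle to be the honest verification of the two fusion-theoretic inputs that make the multiplicities collapse to depend only on $i$: the equality $|C_S(\varphi_{i,j}(P_i))|=|C_S(P_i)|$, which rests on a normal maximal subgroup being fully normalized and hence, by saturation, fully centralized; and the value $d_{i,j}=1$ for extendable morphisms, which is the injectivity half of Proposition~\ref{restriction isom}. The coset structure of the fibers already gives a \emph{common} coefficient $c_1^{(i)}$ for the extendable pieces, but isolating the precise shift by $c_0$ (rather than by $c_0 d_{i,j}$) for the nonextendable pieces is exactly what requires $d_{i,j}=1$; it is these two facts, not the fixed-point bookkeeping, that constitute the real content, everything else being a direct application of the counting formulas of \S\ref{S:fixed pts}.
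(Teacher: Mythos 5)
Your overall strategy is the paper's own: restrict attention to $X_0$ and $X_1$ via Lemma~\ref{T:fixed points lemma}, translate right stability into the equations $|X^{\Delta_{P_i}^{\varphi_{i,j}}}|=|X^{\Delta_{P_i}^{\iota_{P_i}}}|$ by Lemma~\ref{T:stability by fixed-pts}, evaluate both sides with Lemmas~\ref{T:no of fixed pts} and~\ref{T:no of fixed pts on top piece}, and pass to (2) by the opposite operation. Your uniform justification of $|C_S(\varphi_{i,j}(P_i))|=|C_S(P_i)|$ (both subgroups are normal, hence fully normalized, hence fully centralized) is fine and slightly cleaner than the paper's two separate arguments.

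There is, however, one step that fails as written: the claim that Proposition~\ref{restriction isom} ``forces the exact value $d_{i,j}=1$'' for every extendable $\varphi_{i,j}$, in particular $d_{i,0}=1$. Proposition~\ref{restriction isom} carries the hypothesis that the subgroups involved are $\CF$-\emph{centric}, and an index-$p$ subgroup of $S$ need not be $\CF$-centric; when it is not, the conclusion is genuinely false. For example, take $S=C_p\times C_p$ and $\CF=\CF_S(S\rtimes C_q)$ with $C_q$ acting as $\mathrm{diag}(\lambda,1)$: the index-$p$ subgroup $P$ fixed pointwise by the action has $d_{P,0}=q>1$, since every class in $\Out_\CF(S)$ restricts to $[\iota_P]$. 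Fortunately your argument only truly needs $d_{i,0}=1$ to convert $R_i=c_0d_{i,0}+pc_1^{(i)}$ into the asserted nonextendable multiplicity $c_0+pc_1^{(i)}$; for the extendable pieces the constancy of $d_{i,j}$ in $j$ (your coset observation) already yields a common coefficient $c_1^{(i)}$, whatever the common value $d_{i,j}=d_{i,0}$ is. So the gap is localized: you must show $d_{i,0}=1$ precisely when some $\varphi_{i,j}$ with domain $P_i$ is nonextendable. The missing ingredient, which the paper supplies at exactly this point, is that a nonextendable morphism out of a maximal subgroup $P_i$ forces $P_i$ (and its image) to be $\CF$-centric by Alperin's fusion theorem; only then does \cite[A.8]{BrotoLeviOliver2003b} (equivalently, the injectivity half of Proposition~\ref{restriction isom}) apply to give $d_{i,0}=1$. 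With that one observation inserted, your proof closes and coincides with the paper's.
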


\begin{proof}
We prove (1) and obtain (2) by taking opposite of (1).  Suppose that $X$ is a right characteristic biset for $\CF$.  Write $X_1 \cong \coprod_{i,j}c_1^{(i,j)}(S\times S)/\Delta_{P_i}^{\varphi_{i,j}}$ where the $c_1^{(i,j)}$ are nonnegative integers.  For each pair $(i,j)$, we have $|X^{\Delta_{P_i}^{\varphi_{i,j}}}|=|X^{\Delta_{P_j}^{\varphi_{i,0}}}|$ by Lemma~\ref{T:stability by fixed-pts}.  Since $|S:P_i|=p$, we have $|X^{\Delta_{P_i}^{\varphi_{i,j}}}|=|X_{0}^{\Delta_{P_i}^{\varphi_{i,j}}}|+|X_{1}^{\Delta_{P_i}^{\varphi_{i,j}}}|$ by Lemma~\ref{T:fixed points lemma}, and
\begin{align*}
	|X_{0}^{\Delta_{P_i}^{\varphi_{i,j}}}|&=c_0a_{i,j}|C_S(\varphi_{i,j}(P_i))|,\\
	|X_{1}^{\Delta_{P_i}^{\varphi_{i,j}}}|&=c_1^{(i,j)}|((S\times S)/
\Delta_{P_i}^{\varphi_{i,j}})^{\Delta_{P_i}^{\varphi_{i,j}}}|=c_1^{(i,j)}\frac{|N_{\varphi_{i,j}}|}{|P_i|}|C_S(\varphi_{i,j}(P_i))|,
\end{align*}
where
\[
	a_{i,j}=|\{ [\alpha]\in\Out_\CF(S)\mid \alpha|_{P_i}\sim_{(S,S)}\varphi_{i,j} \}|,
\]
by Lemma~\ref{T:no of fixed pts} and Lemma~\ref{T:no of fixed pts on top piece}.  Thus we have
\[
	\left(c_0a_{i,j}+c_1^{(i,j)}|N_{\varphi_{i,j}}|/|P_i|\right)|C_S(\varphi_{i,j}(P_i))| = \left(c_0a_{i,0}+c_1^{(i,0)}|N_{\varphi_{i,0}}|/|P_i|\right)|C_S(P_i)|.
\]

Case 1: $\varphi_{i,j}$ is extendable.  Then $\varphi_{i,j}=\alpha|_{P_i}$ for some $\alpha\in\Aut_\CF(S)$.  Then $C_S(\varphi_{i,j}(P_i)) = C_S(\alpha(P_i)) = \alpha(C_S(P_i))$, so $|C_S(\varphi_{i,j}(P_i))| = |C_S(P_i)|$.  Also $N_{\varphi_{i,j}}=N_{\varphi_{i,0}}=S$.  Now if $\beta\in\Aut_\CF(S)$ and $\beta|_{P_i}\sim_{(S,S)}\varphi_{i,j}$, then $\alpha|_{P_i}\sim_{(S,S)}\beta|_{P_i}$, so $\beta^{-1}\circ\alpha|_{P_i}\sim_{(S,S)}\id_{P_i}$.  This shows that $a_{i,j}=a_{i,0}$.  Hence, we have $c_1^{(i,j)}=c_1^{(i,0)}=:c_1^{(i)}$.

Case 2: $\varphi_{i,j}$ is not extendable.  Then $P_i$ and $\varphi_{i,j}(P_i)$ are $\CF$-centric by Alperin's fusion theorem.  So $|C_S(\varphi_{i,j}(P_i))| = |Z(\varphi_{i,j}(P_i))| = |\varphi_{i,j}(Z(P_i))| = |Z(P_i)| = |C_S(P_i)|$.  Also $N_{\varphi_{i,j}}=P_i$ and $N_{\varphi_{i,0}} = S$.  We have $a_{i,j}=0$ since $\varphi_{i,j}$ is nonextendable.  On the other hand, suppose $\alpha|_{P_i}\sim_{(S,S)}\iota_{P_i}$, i.e.\@ there is $\alpha\in\Aut_\CF(S)$ such that $\alpha|_{P_i}=c_x|_{P_i}$ for some $x\in S$.  By \cite[A8]{BrotoLeviOliver2003b}, we have $\alpha=c_x\circ c_z$ for some $z\in Z(P_i)$, and so $\alpha\sim_{(S,S)}\id_S$.  Thus $a_{i,0}=1$.  Therefore, we have $c_1^{(i,j)}=c_0+pc_1^{(i)}$.
\end{proof}

\section{Minimal bisets for Ruiz-Viruel exotic systems} \label{S:minimal biset for RV}

In this section, we determine minimal left characteristic bisets for Ruiz-Viruel exotic fusion systems~\cite{RuizViruel2004}.  In particular, we show that every Ruiz-Viruel exotic fusion system has a unique minimal left characteristic biset, and that in fact it is also a unique minimal right characteristic biset.

First we review some of the results on the Ruiz-Viruel exotic fusion systems and fix notations.  Throughout this section, let $p$ be an odd prime, and let $S$ be an extraspecial $p$-group of order $p^3$ and exponent $p$. Then $S$ has  center $Z(S)=\la z \ra$ of order $p$, and exactly $p+1$ subgroups
\[
	V_i = \la z, u_i \ra\quad(0\leq i\leq p)
\]
of order $p^2$, which are elementary abelian and centric in $S$.  Therefore, for any fusion system $\CF$ on $S$, all the $V_i$ are $\CF$-centric.  Let us fix the ordered basis $(z,u_i)$ for $V_i$ as an $\mathbb{F}_p$-vector space.  Via this ordered basis we get an isomorphism $\Aut(V_i) \cong \GL_2(p)$.  Then $U_i:=\Aut_S(V_i) \cong \left\{\begin{pmatrix} 1 & *\\0 & 1 \end{pmatrix}\right\} =: U \leq \GL_2(p)$. Set some subgroups of $\GL_2(p)$ as
\[
	B:=N_{\GL_2(p)}(U)=\left\{\begin{pmatrix} * & *\\0 & * \end{pmatrix}\right\},\quad T:=\left\{\begin{pmatrix} * & 0\\0 & * 
\end{pmatrix}\right\},\quad R:=\left\{\begin{pmatrix} 0 & *\\ * & 0 \end{pmatrix}\right\},
\]
and denote by $B_i$, $T_i$ and $R_i$ their isomorphic images in $\Aut_\CF(V_i)$.  Note that $T \sqcup R$ is a set of representatives of $U$-$U$-double cosets in $\GL_2(p)$.

Let $\CF$ be a saturated fusion system on $S$.  By the extension axiom, we have
\begin{gather*}
	\Aut_\CF(V_i)\cap B_i = \Aut_\CF^{e}(V_i),\\
	\Aut_\CF(V_i) - B_i = \Aut_\CF^{n}(V_i),
\end{gather*}
for $0\leq i\leq p$.  Note that $\Aut_\CF(V_i)$ is closed under left and right $\Aut_S(V_i)$-action in $\Aut(V_i)$.  Thus $\Aut_\CF(V_i)\cap T_i$ is a set of representatives of extendable $\CF$-automorphisms of $V_i$, and $\Aut_\CF(V_i)\cap R_i$ is a set of representatives of nonextendable $\CF$-automorphisms of $V_i$, both 
with respect to $S$-$S$-conjugacy.  

The subgroup $V_i$ is $\CF$-radical if and only if $\Aut_\CF(V_i)$ contains $\SL_2(p)$.  In this case, we 
have 
\[
	\Aut_\CF(V_i)\cong \SL_2(p):r_i \quad\text{ for some } r_i|(p-1).
\]
Then
\begin{gather*}
	\Aut_\CF(V_i)\cap T_i = \left\{ \begin{pmatrix} k & 0\\ 0 & l \end{pmatrix}_{V_i} \mid k,l\in
\mathbb{F}_p^{\times}, kl\in\la \mu^{(p-1)/r_i}\ra \right\},\\
	\Aut_\CF(V_i)\cap R_i = \left\{ \begin{pmatrix} 0 & l\\ k & 0 \end{pmatrix}_{V_i} \mid k,l\in
\mathbb{F}_p^{\times}, -kl\in\la \mu^{(p-1)/r_i}\ra \right\},
\end{gather*}
where $\mathbb{F}_p^{\times}=\la\mu\ra$.  In particular, we have 
\[	
	|\Aut_\CF(V_i)\cap T_i|=|\Aut_\CF(V_i)\cap R_i|=(p-1)r_i.
\]
For later use, set $\Lambda^e_i=\{ (k,l)\mid kl\in\la \mu^{(p-1)/r_i}\ra \}$, 
$\Lambda^{n}_i=\{ (k,l)\mid -kl\in\la \mu^{(p-1)/r_i}\ra \}$.

We will consider saturated fusion systems $\CF$ on $S$ such that all $V_i$ are $\CF$-radical.  In particular, all exotic fusion systems on $S$ have this property.  We list all of them in the following table.  
\begin{center}
\renewcommand{\arraystretch}{1.5}
\begin{tabular}{|c|c|c|c|c|}
\hline
$p$ & $\Out_\CF(S)$ & $|V_i^{\CF}|$ & $r_i$ & Group\\
\hline
3 & $D_8$ & $2, 2$ & $2, 2$ & ${}^{2}F_4(2)'$\\\hline
3 & $SD_{16}$ & 4 & $2$ & $J_4$\\\hline
5 & $4S_4$ & 6 & $4$ & $Th$\\\hline
7 & $D_{16}\times 3$ & $4, 4$ & $2, 2$ & \\\hline
7 & $6^2:2$ & $6, 2$ & $2$, $6$ &  \\\hline
7 & $SD_{32}\times 3$ & $8$ & $2$ & \\\hline
\end{tabular}
\end{center}
Here $|V_i^\CF|$ denotes the size of the $\CF$-conjugacy class of subgroups of $S$ containing $V_i$. In the columns $|V_i^\CF|$ and $r_i$, numbers corresponding to different $\CF$-conjugacy classes are separated by commas, and they are in the same order in these two columns.  The column ``Group'' indicates finite groups realizing corresponding saturated fusion systems.  An empty entry in this column means that the corresponding saturated fusion system is exotic.  For example, when $p=7$, the saturated fusion system on $S$ with $\Out_\CF(S) \cong 6^2:2$ has exactly two $\CF$-conjugacy classes of subgroups of order $7^2$.  One of them consists of 6 subgroups of order $7^2$ and their automorphism groups are isomorphic to $\SL_2(7):2$; the other consists of 2 subgroups of order $7^2$ and their automorphism groups are isomorphic to $\SL_2(7):6 = \GL_2(7)$.  This saturated fusion system is exotic.

One observes that the numerical relation $|\Out_\CF(S)|=(p-1)|V_i^\CF|r_i$ is satisfied in all cases.  This is not a coincidence, and indeed a special case of a more general phenomenon.

\begin{lem} \label{T:f-number}
Let $\CF$ be a saturated fusion system on $S$. Suppose that $V_i$ is $\CF$-radical for some $1\leq i\leq p$.  Then we have
\[
	f:=\frac{|\Out_\CF(S)|}{p-1}=|V_i^\CF|r_i=|\{ [\alpha]\in\Out_\CF(S)\mid \alpha(z)=z^m \}|
\]
for any $m\in\BF_p^\times$.
\end{lem}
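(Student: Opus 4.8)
The plan is to extract two independent pieces of information about $\Out_\CF(S)$ and combine them: the action on the center $Z(S)=\la z\ra$, and the restriction bijection of Proposition~\ref{restriction isom}. Throughout I identify each $\Aut(V_j)\cong\GL_2(p)$ via the ordered basis $(z,u_j)$, so that (by the same extension-axiom argument that yields $\Aut_\CF^{e}(V_i)=\Aut_\CF(V_i)\cap B_i$) an $\CF$-isomorphism between two of the $V_j$ is extendable precisely when its matrix is upper triangular, i.e.\ when it preserves the center line $\la z\ra$, and a diagonal morphism $\begin{pmatrix} k & 0\\ 0 & l\end{pmatrix}_{V_j}$ acts on $z$ by $z\mapsto z^k$.

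First I would dispose of the last equality. Since $Z(S)=\la z\ra$ is characteristic in $S$ and $\Inn(S)$ acts trivially on it, restriction to $Z(S)$ defines a group homomorphism $\theta\colon\Out_\CF(S)\to\BF_p^\times$ sending $[\alpha]$ to the scalar $m$ with $\alpha(z)=z^m$, whose fibre over $m$ is exactly $\{[\alpha]\in\Out_\CF(S)\mid \alpha(z)=z^m\}$. The key point is that $\theta$ is surjective: for any $m\in\BF_p^\times$ the element $\begin{pmatrix} m & 0\\ 0 & m^{-1}\end{pmatrix}_{V_i}$ lies in $\SL_2(p)\subseteq\Aut_\CF(V_i)$ by $\CF$-radicality of $V_i$, it is diagonal hence extendable, and it acts on $z$ by $z\mapsto z^m$; extending it to some $\alpha\in\Aut_\CF(S)$ gives $\theta([\alpha])=m$. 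Hence every fibre of $\theta$ has the same size $|\ker\theta|=|\Out_\CF(S)|/(p-1)=f$, which is the asserted last equality.

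For the middle equality I would compute $|\Out_\CF(S)|$ through the bijection $\Out_\CF(S)\cong\Hom_\CF^{e}(V_i,S)/\sim_{(S,S)}$ provided by Proposition~\ref{restriction isom} (note $V_i\unlhd S$ is $\CF$-centric of index $p$), decomposing the right-hand side by the image of an extendable morphism. Because each $V_j\unlhd S$, pre- and post-composition by inner maps of $S$ preserve the image, so $\sim_{(S,S)}$ respects targets and
\[
	\Hom_\CF^{e}(V_i,S)/\sim_{(S,S)}\;=\;\coprod_{V_j}\bigl(\{\text{extendable }\CF\text{-isomorphisms }V_i\to V_j\}/\sim_{(S,S)}\bigr),
\]
where $V_j$ runs over the images that occur. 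As an extendable morphism is a restriction $\alpha|_{V_i}$ with $\alpha(V_i)=V_j$, these images are precisely the $\Aut_\CF(S)$-orbit of $V_i$; and that orbit is all of $V_i^\CF$, since transitivity of $\SL_2(p)\subseteq\Aut_\CF(V_j)$ on lines lets one post-compose any $\CF$-isomorphism $V_i\to V_j$ into a center-preserving, hence extendable, one. Finally, fixing one extendable $\eta\colon V_i\to V_j$ and using $U\unlhd B$ (so that $\eta^{-1}\Aut_S(V_j)\eta=\Aut_S(V_i)$), the map $\phi\mapsto\eta^{-1}\phi$ descends to a bijection from the $V_j$-summand onto $\Aut_\CF^{e}(V_i)/\sim_{(S,S)}$, whose size is $|\Aut_\CF(V_i)\cap T_i|=(p-1)r_i$. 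Summing over the $|V_i^\CF|$ targets gives $|\Out_\CF(S)|=(p-1)|V_i^\CF|r_i$, i.e.\ $f=|V_i^\CF|r_i$.

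The main obstacle is this target-by-target analysis: one must verify that $S$-$S$-conjugacy neither merges morphisms with distinct images nor overlooks part of $V_i^\CF$, and that every target contributes the identical count $(p-1)r_i$. The three ingredients above handle exactly these points — normality of the $V_j$ separates targets, transitivity of $\SL_2(p)$ on lines identifies the targets with $V_i^\CF$, and normality of $U$ in $B$ transports the count from an arbitrary target back to $\Aut_\CF^{e}(V_i)$. I would deliberately avoid the tempting orbit–stabilizer route, since identifying the kernel of the restriction $\Aut_\CF(S)_{V_i}\to\Aut_\CF^{e}(V_i)$ with the inner automorphisms coming from $V_i$ is precisely the fusion-dependent subtlety that the restriction bijection of Proposition~\ref{restriction isom} lets us bypass.
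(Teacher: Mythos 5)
Your proof is correct and follows essentially the same route as the paper's: both establish the last equality by showing the restriction map $\Out_\CF(S)\to\Aut(Z(S))$ is surjective via the diagonal element $\begin{pmatrix} m & 0\\ 0 & m^{-1}\end{pmatrix}_{V_i}\in\SL_2(p)$, and both obtain the middle equality from Proposition~\ref{restriction isom} by decomposing $\Hom_\CF^e(V_i,S)/\sim_{(S,S)}$ according to the target $V_j$ and transporting each summand to $\Aut_\CF^e(V_i)/\sim_{(S,S)}$, of size $(p-1)r_i$. The only cosmetic differences are that you phrase the last equality via the kernel of a homomorphism rather than fiber bijections, and you produce an $\alpha\in\Aut_\CF(S)$ with $\alpha(V_i)=V_j$ by straightening a $\CF$-isomorphism using transitivity of $\SL_2(p)$ on lines, where the paper invokes Alperin's fusion theorem.
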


\begin{proof}
By Proposition~\ref{restriction isom}, we have
\[
	|\Out_\CF(S)|=|\Hom_\CF^e(V_i,S)/\sim_{(S,S)}|.
\]
We claim that
\[
	|\Hom_\CF^e(V_i,S)/\sim_{(S,S)}| = |V_i^\CF| \cdot |\Aut_\CF^e(V_i)/\sim_{(S,S)}|.
\]
Indeed, if $V_i \cong_\CF V_j$, then there is $\alpha\in\Aut_\CF(S)$ such that $\alpha(V_i)=V_j$ by Alperin's fusion theorem.  Then composition with $\alpha$ induces a bijection
\[
	\Aut^e_\CF(V_i) /\sim_{(S,S)} \xrightarrow{\simeq} \{ \varphi \in \Hom^e_\CF(V_i,S) \mid \varphi(V_i) = V_j \} /\sim_{(S,S)}.
\]
Since this holds for every $j$ with $V_i\cong_\CF V_j$, the claim follows.  Since 
\[
	|\Aut_\CF^e(V_i)/\sim_{(S,S)}| = |\Aut_\CF(V_i) \cap T_i| = (p-1)r_i,
\]
as we have observed previously in this section, the first equaltiy of the proposition follows.

Now consider the restriction map $\Aut_\CF(S) \to \Aut_\CF(Z(S))$.  Since inner automorphisms of $S$ acts as the identity map on $Z(S)$, this restriction map induces the group homomorphism
\[
	\Out_\CF(S) \to \Aut_\CF(Z(S)),\quad [\alpha] \mapsto \alpha|_{Z(S)}.
\]
For each $m\in\BF_p^\times$, we have $\begin{pmatrix} m & 0 \\ 0 & m^{-1} \end{pmatrix}_{V_i} \in \Aut_\CF^e(V_i)$ as $ \Aut_\CF^e(V_i)$ contains $\SL_2(p)$, and hence there exists $\alpha\in\Aut_\CF(S)$ such that $\alpha(z)=z^m$.  For each pair $m,n\in\BF_p^\times$, choose $\beta\in\Aut_\CF(S)$ such that $\beta(z)=z^{nm^{-1}}$.  Then composition with $\beta$ defines the bijection
\[
	\{ [\alpha]\in\Out_\CF(S)\mid \alpha(z)=z^m \} \to \{ [\alpha]\in\Out_\CF(S)\mid \alpha(z)=z^n \}.
\]
Thus we get the second equality of the proposition.
\end{proof}

Now suppose that $\CF$ is a saturated fusion system on $S$ such that all $V_i$ are $\CF$-radical.  Let us enumerate $\CF$-morphisms between subgroups of $S$ of index $p$ up to $S$-$S$-conjugacy.  Suppose $V_i \cong_\CF V_j$.  By Alperin's fusion theorem, there is $\alpha\in\Aut_\CF(S)$ such that $\alpha(V_i)=V_j$.  Then $\alpha(Z(S))=Z(S)$ and $\alpha(u_i)\in u_j^k\la z\ra$ for some $0 < k < p$.  We normalize the notations for the sake of convenience as follows.  Since $\Aut_\CF(V_j)\geq \SL_2(p)$, by composing $\alpha$ with some extendable $\CF$-automorphism of $V_j$ and replacing $u_j$ by a suitable power of $u_j$, we obtain $\alpha_{i,j}\in\Aut_\CF(S)$ sending $z$ to $z$ and $u_i$ to $u_j$.  We do this one by one for each $\CF$-conjugacy class of the $V_i$ in a compatible way.  By composing elements of $\Aut_\CF(V_i)\cap R_i$ with $\alpha_{i,j}$, we obtain nonextendable $\CF$-isomorphisms
\[
	\varphi_{i,j}^{k,l}\colon V_i \to V_j
\]
sending $z$ to $u_j^k$ and $u_i$ to $z^l$ where $(k,l)\in\Lambda^n_i$.  The set
\[
	\{ \varphi_{i,j}^{k,l} \mid V_i\cong_\CF V_j, (k,l)\in\Lambda^n_i \}
\]
is a set of representatives of nonextendable isomorphisms between index $p$ subgroups of $S$ with 
respect to $S$-$S$-conjugacy.  Similarly we have a set
\[
	\{ \psi_{i,j}^{k,l} \mid V_i\cong_\CF V_j, (k,l)\in\Lambda^e_i \}
\]
of representatives of extendable isomorphisms between index $p$ subgroups of $S$ with respect to $S$-$S$-conjugacy, where
\[
	\psi_{i,j}^{k,l}\colon V_i \to V_j
\]
sends $z$ to $z^k$ and $u_i$ to $u_j^l$.

Then, by Proposition~\ref{T:layer1}, if $X$ is a right characteristic biset for $\CF$ we have
\begin{gather}
	X_0 \cong c_0 \coprod_{[\alpha]\in\Out_\CF(S)} (S\times S)/\Delta^{\alpha}_{S}, \tag{$X_0$} \label{E:X0 with undetermined coeffs}\\
	X_1 \cong \coprod_{\substack{V_i\cong_\CF V_j\\ (k,l)\in\Lambda^e_i}} c_1^{(i)}(S\times S)/
\Delta^{\psi_{i,j}^{k,l}}_{V_i} \amalg \coprod_{\substack{V_i\cong_\CF V_j\\ (k,l)\in\Lambda^{n}_i}} 
(c_0+pc_1^{(i)})(S\times S)/\Delta^{\varphi_{i,j}^{k,l}}_{V_i}, \tag{$X_1$} \label{E:X1 with undetermined coeffs}
\end{gather}
for some nonnegative integers $c_0$ and $c_1^{(i)}$ such that $p \nmid c_0$.

Now we consider $\CF$-maps between subgroups of order $p$.  Since we are assuming that all $V_i$ are $\CF$-radical, all elements of $S$ of order $p$ are $\CF$-conjugate.  For elements $\xi, \zeta\in S$ of order $p$, let $\Delta_{\xi}^{\zeta}:=\Delta_{\la \xi \ra}^{\varphi}$ where $\varphi\colon \la\xi\ra \to \la\zeta\ra$ sends $\xi$ to $\zeta$.  Then the $\CF$-graph subgroups of $S\times S$ (i.e. subgroups of $S\times S$ of the form $\Delta_Q^\varphi$ for some $Q\leq S$ and $\varphi\in\Hom_\CF(Q,S)$) of order $p$ up to $S\times S$-conjugacy are
\[
	\Delta_{\xi}^{\zeta}\quad\text{where } \xi\in\{ z, u_i \}_{0\leq i\leq p}, \zeta\in\{ z^m, u_j^m \}_{0\leq j\leq 
p, 1\leq m\leq p-1}.
\]
Thus
\begin{align} \label{E:X2 with undetermined coeffs}
	X_2 \cong \coprod_{1\leq m\leq p-1}c_2^{(z,z^m)}(S\times S)/\Delta_{z}^{z^m} 
	\amalg \coprod_{\substack{0\leq i\leq p\\ 1\leq m\leq p-1}}c_2^{(u_i,z^m)}(S\times S)/\Delta_{u_i}^{z^m} \tag{$X_2$}\\ 
	\amalg \coprod_{\substack{0\leq j\leq p\\ 1\leq m\leq p-1}}c_2^{(z,u_j^m)}(S\times S)/
\Delta_{z^m}^{u_j^m} 
	\amalg \coprod_{\substack{0\leq i, j\leq p\\ 1\leq m\leq p-1}}c_2^{(u_i,u_j^m)}(S\times S)/
\Delta_{u_i}^{u_j^m} \notag
\end{align}
for some nonnegative integers $c_2^{(z,z^m)}$, $c_2^{(u_i,z^m)}$, $c_2^{(z,u_j^m)}$ and $c_2^{(u_i,u_j^m)}$.

Now we can state the main result of this section precisely.

\begin{thm} \label{T:minimal biset for RV}
Let $p$ be an odd prime and let $S$ be an extraspecial group of order $p^3$ and exponent $p$.  Let $\CF$ be a saturated fusion system on $S$ such that all subgroups $V_i$ of $S$ of order $p^2$ are $\CF$-radical.  Then there exists a unique minimal right characteristic biset X for $\CF$ given as follows:
\begin{gather*}
	X_0 \cong \coprod_{[\alpha]\in\Out_\CF(S)} (S\times S)/\Delta^{\alpha}_{S},\\
	X_1 \cong \coprod_{\substack{V_i\cong_\CF V_j\\ (k,l)\in\Lambda^{n}_i}} (S\times S)/
\Delta^{\varphi_{i,j}^{k,l}}_{V_i},\\
	X_2 \cong \coprod_{\substack{V_i\cong_\CF V_j \\  1\leq m\leq p-1}} (f-r_i)(S\times S)/\Delta_{u_i}^{u_j^m} 
\amalg \coprod_{\substack{V_i\not\cong_\CF V_j \\  1\leq m\leq p-1}} f(S\times S)/\Delta_{u_i}^{u_j^m}.
\end{gather*}
Moreover, $X$ is a unique minimal left characteristic biset for $\CF$ and hence a unique minimal characteristic biset for $\CF$.
\end{thm}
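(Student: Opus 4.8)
\emph{Setup and reduction.}
Since $|S| = p^3$, any right characteristic biset decomposes as $X = X_0 \amalg X_1 \amalg X_2 \amalg X_3$. The key observation is that right stability constrains only the top three layers: by Lemma~\ref{T:fixed points lemma}(2) a count $|X^{\Delta_Q^\varphi}|$ with $|S:Q| = p^r$ sees only $X_{\leq r}$, and for $Q = 1$ the condition of Lemma~\ref{T:stability by fixed-pts} is vacuous. Hence $X_3$ is entirely free, so a minimal biset has $X_3 = \emptyset$, and it suffices to determine $X_0$, $X_1$, $X_2$ and minimize the total size. By Proposition~\ref{T:layer0} and Proposition~\ref{T:layer1}, $X_0$ and $X_1$ have the shapes \eqref{E:X0 with undetermined coeffs} and \eqref{E:X1 with undetermined coeffs} with free parameters $c_0$ (with $p \nmid c_0$) and $c_1^{(i)} \geq 0$, while $X_2$ has the shape \eqref{E:X2 with undetermined coeffs}. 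Everything thus reduces to solving for the remaining coefficients and minimizing.

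\emph{Determining $X_2$.}
I would impose right stability in the form of Lemma~\ref{T:stability by fixed-pts}, namely $|X^{\Delta_Q^\varphi}| = |X^{\Delta_Q^{\id}}|$, at every subgroup $Q$ of order $p$. Because all $V_i$ are $\CF$-radical, every element of order $p$ is $\CF$-conjugate, and since $N_S(\la\xi\ra) = C_S(\la\xi\ra)$ for an order-$p$ subgroup, the ``slope'' of a graph $\Delta_\xi^\zeta$ is an $S\times S$-conjugacy invariant; this yields exactly the four families of \eqref{E:X2 with undetermined coeffs}. For each graph I decompose $|X^{\Delta_\xi^\zeta}| = |X_0^{\Delta_\xi^\zeta}| + |X_1^{\Delta_\xi^\zeta}| + |X_2^{\Delta_\xi^\zeta}|$ by Lemma~\ref{T:fixed points lemma}(2) and evaluate each layer by Lemma~\ref{T:no of fixed pts} and Lemma~\ref{T:no of fixed pts on top piece}. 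The governing facts are that every $\alpha \in \Aut_\CF(S)$ fixes $Z(S)$ (so $\alpha(z) = z^d$) and preserves the central/non-central dichotomy, so that $X_0$ and the nonextendable part of $X_1$ feed only the graphs respecting this dichotomy; the relevant counts of classes $[\alpha]$ are supplied by Lemma~\ref{T:f-number} (the number $f$) together with the fact that exactly $r_i$ classes in $\Out_\CF(S)$ fix the image $\bar{u}_i$ of $u_i$ in $S/Z(S)$ (an orbit--stabilizer count using $|V_i^\CF|r_i = f$ and $\Aut_\CF(V_i) \supseteq \SL_2(p)$). Grouping the equations by source subgroup ($\la z\ra$ or $\la u_i\ra$) and target then produces a system of linear relations among the coefficients $c_2^{(\cdot,\cdot)}$.

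\emph{Minimization and uniqueness.}
Solving this system, the central-source and central-target families determine all of $c_2^{(z,z^m)}$, $c_2^{(z,u_j^m)}$ and the non-central-to-non-central coefficients in terms of the free nonnegative parameters $c_2^{(z,z)}$ and the $c_2^{(u_i,z^m)}$ (the latter constant in $m$), each entering with a strictly positive coefficient; likewise $c_0$ and the $c_1^{(i)}$ enter positively. Consequently the total size, which equals $p^5\sum c_2^{(\cdot,\cdot)}$ on the bottom layer plus the contributions of $X_0$ and $X_1$, is a strictly increasing function of these parameters, so its minimum is attained uniquely at $c_0 = 1$, $c_1^{(i)} = 0$, and the extraneous $X_2$-coefficients equal to $0$. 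Feeding these back, the coefficient $c_2^{(u_i,u_j^m)}$ becomes $f - r_i$ when $V_i \cong_\CF V_j$ and $f$ otherwise, giving exactly the $X_0$, $X_1$, $X_2$ displayed in the theorem. It remains to confirm that this candidate genuinely satisfies \eqref{generation}, \eqref{coprime} (here one uses that $|\Out_\CF(S)|$ is prime to $p$ by saturation) and right stability at \emph{all} $Q$, not only those of order $p$; this verification, which reverses the computation above, establishes existence, while the strict monotonicity gives uniqueness.

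\emph{The two-sided statement.}
Finally I would establish a self-duality $X \cong X^\op$ and invoke that the opposite operation is a size-preserving bijection from right to left characteristic bisets. Layer by layer: $(X_0)^\op \cong X_0$ via $[\alpha] \mapsto [\alpha^{-1}]$ on $\Out_\CF(S)$; $(X_1)^\op \cong X_1$ because $(\varphi_{i,j}^{k,l})^{-1}$ is again a nonextendable isomorphism, with $(i,j,k,l) \mapsto (j,i,l^{-1},k^{-1})$ a bijection of the indexing set (using $r_i = r_j$ and the symmetry of $\Lambda_i^n$); and $(X_2)^\op \cong X_2$ because $(u_i \mapsto u_j^m)^{-1}$ is $u_j \mapsto u_i^{m^{-1}}$ and $r_i = r_j$ whenever $V_i \cong_\CF V_j$. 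Since $X$ is the unique minimal right characteristic biset, its opposite $X^\op$ is the unique minimal left characteristic biset; as $X \cong X^\op$, the biset $X$ is simultaneously the unique minimal left and right characteristic biset, and being both left- and right-stable, the unique minimal characteristic biset. The main obstacle throughout is the second step: the careful fixed-point bookkeeping for the nonextendable layer $X_1$ against the order-$p$ graphs, and the stabilizer count in $\Out_\CF(S) \leq \GL_2(p)$ showing exactly $r_i$ classes fix $\bar{u}_i$, which is what makes $f$, $r_i$ and $|V_i^\CF|$ combine into the coefficients $f - r_i$ and $f$.
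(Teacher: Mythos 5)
Your proposal is correct and follows essentially the same route as the paper's proof: Propositions~\ref{T:layer0} and~\ref{T:layer1} pin down $X_0$ and $X_1$, the right-stability equations $|X^{\Delta_\xi^\zeta}|=|X^{\Delta_\xi^\xi}|$ at the order-$p$ graph subgroups (evaluated via Lemmas~\ref{T:no of fixed pts}, \ref{T:no of fixed pts on top piece} and~\ref{T:f-number}) determine $X_2$ up to nonnegative free parameters whose minimization is unique, and the left/two-sided statement follows from $X\cong X^{\op}$. Your added observations --- that the layer $X_3$ is unconstrained by stability and hence empty in a minimal biset, and the explicit index bijections $(i,j,k,l)\mapsto(j,i,l^{-1},k^{-1})$ exhibiting the self-duality --- are correct details that the paper leaves implicit.
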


\begin{proof}
The proof is done by explicitly computing all right characteristic bisets for $\CF$ and showing that there is the smallest one among them.  For this, let $X$ be an arbitrary right characteristic biset for $\CF$.  The top two layers $X_0$ and $X_1$ of $X$ are given as in \eqref{E:X0 with undetermined coeffs} and \eqref{E:X1 with undetermined coeffs} by Proposition~\ref{T:layer1}.  Write $X_2$ as in \eqref{E:X2 with undetermined coeffs}.  The coefficients of $X_2$ are completely determined by the system of equations
\[
	|X^{\Delta_{\xi}^{\zeta}}|=|X^{\Delta_{\xi}^{\xi}}|,\qquad \xi\in\{ z, u_i \}_{0\leq i\leq p}, \zeta\in\{ z^m, u_j^m \}_{0\leq j\leq 
p, 1\leq m\leq p-1}.
\]
To solve this system of equations, first compute the numbers of fixed points of $\Delta_{\xi}^{\zeta}$ on transitive subbisets of $X$ using Lemma~\ref{T:no of fixed pts} and Lemma~\ref{T:no of fixed pts on top piece}:
\begin{align*}
	|((S\times S)/\Delta_{S}^{\alpha})^{\Delta_{\xi}^{\zeta}}|&=
	\begin{cases}
		p^3,	&\text{if $\alpha(\xi)=\zeta\in\la z\ra$}\\
		p^2,	&\text{if $\alpha(\xi)\in\zeta\la z\ra \neq \la z \ra$}\\
		0,	&\text{otherwise},
	\end{cases}\\
	|((S\times S)/\Delta_{V_i}^{\psi_{i,j}^{k,l}})^{\Delta_{\xi}^{\zeta}}|&=
	\begin{cases}
		p^4,	&\text{if $(\xi, \zeta)=(z,z^k)$}\\
		p^3,	&\text{if $(\xi, \zeta)=(u_i,u_{j}^{l})$}\\
		0,	&\text{otherwise},
	\end{cases}\\
	|((S\times S)/\Delta_{V_i}^{\varphi_{i,j}^{k,l}})^{\Delta_{\xi}^{\zeta}}|&=
	\begin{cases}
		p^3,	&\text{if $(\xi,\zeta)=(z, u_j^k)$ or $(\xi, \zeta ) = (u_i,z^l)$}\\
		p^2,	&\text{if $\xi=u_i$, $\zeta\in\la u_j\ra$}\\
		0,	&\text{otherwise},
	\end{cases}\\
	|((S\times S)/\Delta_{\xi}^{\zeta})^{\Delta_{\xi}^{\zeta}}|&=
	\begin{cases}
		p^5,	&\text{if $\xi, \zeta\in\la z \ra$}\\
		p^4,	&\text{if $\xi\in\la z\ra\not\ni\zeta$ or $\xi\notin\la z\ra\ni\zeta$}\\
		p^3,	&\text{if $\xi\notin\la z\ra\not\ni\zeta$.}	
	\end{cases}
\end{align*}

Adding up those numbers, we get the following result using Lemma~\ref{T:f-number}:
\begin{align*}
	|X_0^{\Delta_{z}^{z^m}}|&=p^3fc_0,\qquad |X_0^{\Delta_{u_i}^{z^m}}|=|X_0^{\Delta_{z}^{u_j^m}}|=0,\\
	|X_0^{\Delta_{u_i}^{u_j^m}}|
	&=c_0p^2|\{ [\alpha]\in\Out_\CF(S)\mid \alpha(u_i)\in u_j^m\la z\ra \}|\\
	&=
	\begin{cases}
		p^2r_ic_0,	&\text{if $V_i\cong_\CF V_j$}\\
		0,			&\text{otherwise},
	\end{cases},\\
	|X_1^{\Delta_{z}^{z^m}}|&=p^4f\sum_{0\leq i\leq p}c_1^{(i)},\qquad |X_1^{\Delta_{u_i}^{z^m}}|=p^3fc_0+p^4fc_1^{(i)}\\	
	|X_1^{\Delta_{z}^{u_j^m}}|&=p^3fc_0+p^4r_j\sum_{V_i\cong_\CF V_j}c_1^{(i)},\\
	|X_1^{\Delta_{u_i}^{u_j^m}}|&=
	\begin{cases}
		p^3r_ic_1^{(i)}+p^2(p-1)r_i(c_0+pc_1^{(i)}),	&\text{if $V_i\cong_\CF V_j$}\\
		0,				&\text{otherwise}.
	\end{cases}
\end{align*}
The second equality for $|X_0^{\Delta_{u_i}^{u_j^m}}|$ needs more explanation.  If there is $\alpha\in\Aut_\CF(S)$ such that $\alpha(u_i)\in u_j^m\la z\ra$, then $\alpha(V_i)=V_j$.  Thus if $V_i\not\cong_\CF V_j$, then $\{ \alpha\in\Out_\CF(S)\mid \alpha(u_i)\in u_j^m\la z\ra \}=\emptyset$.  Suppose $V_i\cong_\CF V_j$.  Then by the extension axiom, there is $\alpha\in\Aut_\CF(S)$ such that $\alpha(V_i)=V_j$.  Then $ \alpha(u_i)\in u_j^l\la z\ra$ for some $l\neq 0$.  Since $\Aut_\CF(V_i)\geq \SL_2(p)$, we may modify $\alpha$ so that $\alpha(u_i)=u_j^m$.  Then $\alpha(z)=z^k$ for some $k$ and there are $r_i$ choices for $k$.  By~\cite[A8]{BrotoLeviOliver2003b}, each choice of $k$ determines $\alpha$ up to $S$-$S$-conjugacy.  

Thus
\begin{align*}
	|X_{\leq 1}^{\Delta_z^{z^m}}|&=p^3fc_0+p^4f\sum_{0\leq i\leq p}c_1^{(i)},\\
	|X_{\leq 1}^{\Delta_{u_i}^{z^m}}|&=p^3fc_0+p^4fc_1^{(i)},\\
	|X_{\leq 1}^{\Delta_{z}^{u_j^m}}|&=p^3fc_0+p^4r_j\sum_{V_i\cong_\CF V_j}c_1^{(i)},\\
	|X_{\leq 1}^{\Delta_{u_i}^{u_j^m}}|&=
	\begin{cases}
		p^3r_ic_0+p^4r_ic_1^{(j)},	&\text{if $V_i\cong_\CF V_j$}\\
		0,				&\text{otherwise}.
	\end{cases}
\end{align*}

Now $|X^{\Delta_{\xi}^{\zeta}}|=|X^{\Delta_{\xi}^{\xi}}|$ and $|X^{\Delta_{\xi}^{\zeta}}|=|X_{\leq 
1}^{\Delta_{\xi}^{\zeta}}|+c_2^{(\xi,\zeta)}|((S\times S)/\Delta_{\xi}^{\zeta})^{\Delta_{\xi}^{\zeta}}|$. Thus
\[
	c_2^{(\xi,\zeta)}|((S\times S)/\Delta_{\xi}^{\zeta})^{\Delta_{\xi}^{\zeta}}|=c_2^{(\xi,\xi)}|((S\times S)/
\Delta_{\xi}^{\xi})^{\Delta_{\xi}^{\xi}}|+(|X_{\leq 1}^{\Delta_{\xi}^{\xi}}|-|X_{\leq 
1}^{\Delta_{\xi}^{\zeta}}|).
\]
Then
\begin{align*}
	|X^{\Delta_z^{z^m}}|=|X^{\Delta_z^{z}}|&: c_2^{(z,z^m)}=c_2^{(z,z)}=:c_2^{(z)},\\
	|X^{\Delta_{u_i}^{u_j^m}}|=|X^{\Delta_{u_i}^{u_i}}|&: c_2^{(u_i,u_j^m)}=c_2^{(u_i,u_i)}=:c_2^{(u_i)} 
(\text{if } V_i\cong_\CF V_j),\\
	|X^{\Delta_{u_i}^{u_j^m}}|=|X^{\Delta_{u_i}^{u_i}}|&: c_2^{(u_i,u_j^m)}=r_ic_0+pr_ic_1^{(i)} + c_2^{(u_i)} (\text{if } V_i\not\cong_\CF V_j),\\
	|X^{\Delta_{z}^{u_j^m}}|=|X^{\Delta_{z}^{z}}|&: c_2^{(z,u_j^m)}=pc_2^{(z)}+(f-r_j)\sum_{V_i\cong_\CF V_j}c_1^{(i)}+f\sum_{V_i\not\cong_\CF V_j}c_1^{(i)},\\
	|X^{\Delta_{u_i}^{z^m}}|=|X^{\Delta_{u_i}^{u_i}}|&: pc_2^{(u_i,z^m)}=c_2^{(u_i)}-(f-r_i)c_0-p(f-
r_i)c_1^{(i)}.
\end{align*}
The only restriction on coefficients (other than $p \nmid c_0$) is that they must be nonnegative because  $X$ is a genuine biset:
\[
	p \nmid c_0\geq 1,\quad c_1^{(i)}\geq 0,\quad c_2^{(z)}\geq 0,\quad c_2^{(u_i)}\geq (f-r_i)c_0+p(f-
r_i)c_1^{(i)}.
\]
Thus $X$ is minimal if and only if
\[
	c_0=1,\quad c_1^{(i)}=0,\quad c_2^{(z)}=0,\quad c_2^{(u_i)}=(f-r_i),
\]
and in this case 
\begin{align*}
	c_2^{(u_i,u_j^m)} &= 
	\begin{cases}
		f-r_i,	&\text{if $V_i \cong_\CF V_j$}\\
		f,	&\text{if $V_i \not\cong_\CF V_j$},\\
	\end{cases}\\
	c_2^{(u_i,z^m)} & = c_2^{(z,u_j^m)} = 0.
\end{align*}
The last statement follows from observing that the unique minimal right characteristic biset $X$ is isomorphic to its opposite $X^\op$.
\end{proof}

\begin{rmk} \label{R:numerical rel}
Let us look more closely at numerical relations in the above minimal characteristic biset $X$.  Let $e:=e(X)=|X|/|S|$ and $e_i:=|X_i|/|S|$ $(i=0,1,2)$ so that $e=e_0+e_1+e_2$.  Let  $d_i=e_i/p^i$ $(i=0,1,2)$.  In other words, $d_i$ is the number of transitive subbisets appearing in $X_i$.  Then, using Lemma~\ref{T:f-number}, we see that
\begin{align*}
	d_0 &= |\Out_\CF(S)|,\\
	d_1 &= \sum_{0\leq i\leq p} |V_i^\CF||\Lambda_i^n| = \sum_{0\leq i\leq p} |V_i^\CF|(p-1)r_i = (p+1)|\Out_\CF(S)|,\\
	d_2 &= \sum_{0\leq i\leq p} (f-r_i)|V_i^\CF|(p-1)+\sum_{0\leq i\leq p}f((p+1)-|V_i^\CF|)(p-1)=p(p+1)|\Out_\CF(S)|.
\end{align*}
Thus
\[
	e=d_0+pd_1+p^2d_2=\frac{p^5-1}{p-1}|\Out_\CF(S)|.
\]

We summarize all these numbers in the following table.

\begin{center}
\renewcommand{\arraystretch}{1.5}
\begin{tabular}{|c|c|c|c|c|c|c|c|c|c|c|}
\hline
$p$ & $\Out_\CF(S)$ & $|V_i^{\CF}|$ & $r_i$ & $f$ & $d_0$ & $d_1$ & $d_2$ 
& $e(X)$ & Group\\
\hline
3 & $D_8$ & $2, 2$ & $2, 2$ & $4$ & $8$ & $32$ & $96$ & $968$ & ${}^{2}F_4(2)'$\\\hline
3 & $SD_{16}$ & 4 & $2$ & $8$ & 16 & $64$ & $192$ &$1936$ & $J_4$\\\hline
5 & $4S_4$ &6 & $4$ & $24$ & 96 & $576$ & $2880$ & $74976$ & $Th$\\\hline
7 & $D_{16}\times 3$ & $4, 4$ & $2, 2$ & $8$ & $48$ & $384$ & $2688$ & $134448$ & $\leq 425744$ \\\hline
7 & $6^2:2$ & $6, 2$ & $2$, $6$ & $12$ & $72$ & $576$ & $4032$ & $201672$ & $\leq 638620$ \\\hline
7 & $SD_{32}\times 3$ & $8$ & $2$ & $16$ & $96$ & $768$ & $5376$ & $268896$ & $\leq 851496$\\\hline
\end{tabular}
\end{center}
Numbers in the column ``Group'' are the upper bounds of the exoticity indices obtained by minimal characteristic bisets.
\end{rmk}

\section{Finite groups realizing Ruiz-Viruel exotic fusion systems} \label{S:finite group for RV}

In this section, we analyze how the finite group $G$ realizes fusion more closely, and as a consequence prove Theorem~\ref{T:main2}.  For this, the following basic double coset formula concerning restriction of bisets will be useful.

\begin{lem}[{\cite[\S 2]{Ragnarsson2006}}] \label{T:res-biset}
Let $S$ be a group.  Let $Q,R\leq S$ and let $\varphi\colon Q\to S$, $\psi\colon R\to S$ be injective group homomorphisms.  Then
\[
	{}_{\psi}((S\times S)/\Delta_{Q}^{\varphi}) \cong \coprod_{t\in [\varphi(R)\backslash S/Q]} (R\times S)/\Delta_{\psi^{-1}({}^{t}Q)}^{\varphi\circ c_{t}^{-1}\circ \psi}
\]
\end{lem}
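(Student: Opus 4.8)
The plan is to pass to the language of $(S\times S)$-sets and apply the Mackey-type orbit decomposition of a transitive $G$-set under restriction. Recall that, as an $(S\times S)$-set, $X:=(S\times S)/\Delta_Q^\varphi$ carries the action $(a,b)\cdot\la x,y\ra = \la ax, yb^{-1}\ra$, where $\la x,y\ra$ denotes the coset $(x,y)\Delta_Q^\varphi$. Restricting the left action along $\psi$ turns ${}_\psi X$ into the $(R\times S)$-set obtained by letting $R\times S$ act on the same underlying set through the injective homomorphism $\rho\colon R\times S \to S\times S$, $(r,v)\mapsto(\psi(r),v)$; explicitly $(r,v)\cdot\la x,y\ra = \la\psi(r)x, yv^{-1}\ra$. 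Thus the lemma reduces to decomposing this restricted action into $(R\times S)$-orbits and identifying each orbit with a transitive biset.

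First I would parametrize the orbits. By the standard fact that the $(R\times S)$-orbits on the transitive set $(S\times S)/\Delta_Q^\varphi$ correspond to the double cosets $\rho(R\times S)\backslash (S\times S)/\Delta_Q^\varphi$, and since $\rho(R\times S)=\psi(R)\times S$ has full second coordinate, projecting to the first coordinate identifies these double cosets with $\psi(R)\backslash S/Q$. (The indexing group here is $\psi(R)$, not $\varphi(R)$; indeed $\varphi(R)$ is only defined when $R\leq Q$, so the printed statement contains an evident typo.) A transversal $t$ of $\psi(R)\backslash S/Q$ then supplies orbit representatives $\la t,e\ra$, since the right $S$-action lets one normalize the second coordinate to $e$.

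Next I would compute the stabilizer of each $\la t,e\ra$ in $R\times S$. The $(S\times S)$-stabilizer of the coset $\la t,e\ra=(t,e)\Delta_Q^\varphi$ is the conjugate $(t,e)\Delta_Q^\varphi(t,e)^{-1}=\{(c_t(u),\varphi(u))\mid u\in Q\}$. Hence $(r,v)\in R\times S$ stabilizes $\la t,e\ra$ precisely when $(\psi(r),v)=(c_t(u),\varphi(u))$ for some $u\in Q$; the first coordinate forces $\psi(r)\in{}^{t}Q$, i.e.\ $r\in\psi^{-1}({}^{t}Q)$ with $u=c_t^{-1}(\psi(r))$, and the second then forces $v=\varphi(u)=(\varphi\circ c_t^{-1}\circ\psi)(r)$. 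Therefore the stabilizer equals $\Delta_{\psi^{-1}({}^{t}Q)}^{\varphi\circ c_t^{-1}\circ\psi}$, so the orbit of $\la t,e\ra$ is isomorphic, as an $(R\times S)$-set, to $(R\times S)/\Delta_{\psi^{-1}({}^{t}Q)}^{\varphi\circ c_t^{-1}\circ\psi}$. Summing over $t$ yields the claimed coproduct.

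The delicate part is the bookkeeping in this stabilizer computation: keeping $\varphi$ and $\psi$ distinct, correctly inverting the conjugation $c_t$, and checking that $\varphi\circ c_t^{-1}\circ\psi$ is well-defined and injective on $\psi^{-1}({}^{t}Q)$. As a safeguard I would verify cardinalities: since $\psi$ is injective we have $|\psi^{-1}({}^{t}Q)|=|\psi(R)\cap{}^{t}Q|$, so by the double-coset size formula $|\psi(R)tQ|=|R||Q|/|\psi(R)\cap{}^{t}Q|$ the sizes $|R||S|/|\psi^{-1}({}^{t}Q)|$ of the transitive pieces sum to $|S|^2/|Q|=|X|$, confirming that every orbit is accounted for exactly once. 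Alternatively the lemma follows from Bouc's composition formula applied to ${}_\psi X\cong (R\times_{(R,\psi)}S)\times_S X$, but the direct orbit count above is self-contained and avoids invoking that machinery.
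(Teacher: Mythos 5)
Your proof is correct. The paper does not prove this lemma at all --- it is quoted from Ragnarsson's paper --- so there is nothing to compare against, but your argument (normalize the second coordinate using the free right $S$-action, identify the $(R\times S)$-orbits with double cosets, and compute the stabilizer of $\la t,1\ra$ to be $\Delta_{\psi^{-1}({}^{t}Q)}^{\varphi\circ c_t^{-1}\circ\psi}$) is the standard Mackey-type computation and all the bookkeeping checks out, including the cardinality verification. You are also right that the index set as printed is a typo: $\varphi(R)$ is not even defined unless $R\leq Q$, and the correct index set $[\psi(R)\backslash S/Q]$ is the one the paper itself uses when it applies the lemma in Remark~\ref{R:fusion-corr-rule}.
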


Let $\CF$ be a saturated fusion system on a finite $p$-group $S$ with a left characteristic biset $X$.  Let $G=\Aut({}_{1}X)$, i.e.\ the group of bijections of $X$ preserving the right $S$-action, and define $\iota\colon S\to G$ by $\iota(u)(x)=ux$ for $u\in S$, $x\in X$.  By our previous work~\cite{Park2010Realizing}, we know that $\iota$ is injective and $\CF\cong\CF_{\iota(S)}(G)$.  More precisely, let $R\leq S$ and let $\psi\in\Hom_\CF(R,S)$.  Then the left $\CF$-stability of $X$ implies that ${}_{R}X$ and ${}_{\psi}X$ are isomorphic as $R$-$S$-bisets.  Then any isomorphism of $R$-$S$-bisets $g \colon {}_{R}X \xrightarrow{\sim} {}_{\psi}X$ viewed as an element of $G$ {\em realizes} $\psi$ in the sense that $\psi(u)=gug^{-1}$ for all $u\in R$.  

In fact, we can be more precise about the element $g\in G$. Let us recall the notations in \cite{Park2010Realizing}.  Write 
\[
	X=\coprod_{i=1}^{n} S\times_{(Q_i,\varphi_i)} S
\]
where $Q_i\leq S$, $\varphi_i \in \Hom_\CF(Q_i,S)$ for $1\leq i\leq n$.  For each $i$, fix a set $\{t_{ij}\}_{j \in J_i}$ of 
representatives of the left cosets of $Q_i$ in $S$.  Set $J=\coprod_{i=1}^{n}J_i$.  For each $1\leq i\leq n$, we have a decomposition of right $S$-sets
\[
	S \times_{(Q_i,\varphi_i)} S = \coprod_{j \in J_i} \la t_{ij}, S\ra,
\]
where $\la t_{ij},S\ra:=\{ \la t_{ij},x\ra \mid x\in S \}$ is a regular right $S$-set for all $j\in J_i$. So we have
\[
	G:=\Aut({}_{1}X) \cong S\wr\Sym(J),
\]
where $\Sym(J)$ denotes the symmetric group on the set $J$.  

Let $\ol{G}$ be the image of $G$ in $\Sym(J)$ and use bar notation for images of subgroups or elements of $G$ in $\Sym(J)$.  Then each $J_i$ is an $\ol{S}$-orbit with $|J_i|=|S:Q_i|$.  In particular, we have $|J_i|=1$ precisely when the corresponding $\CF$-morphism $\varphi_i\colon Q_i\to S$ is an automorphism of $S$.  Suppose that the $Q_i$ are indexed so that $|Q_i|\geq |Q_{i+1}|$ for all $i$.  Then there are integers $1\leq n_0\leq n_1\leq n_2\leq\cdots\leq n$ such that $|S:Q_i|=p^{r}$ for all $i$ with $n_r \leq i < n_{r+1}$. With this notation, $\ol{S}$-orbits of $J$ can be represented schematically as follows:
\begin{align*}
	\overbrace{\ast\ast\ast\ast\ast\ast\ast\,\ast}^{[\varphi_{i}](n_0\leq i< n_1)}
\overbrace{(\ast\ast\ast)\cdots(\ast\ast\ast)}^{[\varphi_{i}](n_1\leq i< n_2)}
\overbrace{(\ast\ast\ast\ast\ast\ast\ast\ast\ast)\cdots(\ast\ast\ast\ast\ast\ast\ast\ast\ast)}^{[\varphi_{i}](n_2\leq i< n_3)} \cdots
\end{align*}
Each asterisk in the above diagram represents an element of the index set $J$.  Left $\ol{S}$-orbits of the asterisks are denoted by brackets, except for the first eight asterisks, each of which is a single left $\ol{S}$-orbit.  Each left $\ol{S}$-orbit corresponds to a transitive subbiset of $X$ of the form $S\times_{(Q_i,\varphi_i)} S$, and indexed by the $S$-$S$-conjugacy class of $\CF$-morphisms $[\varphi_i]$ determining it.  In a left $\ol{S}$-orbit indexed by the $S$-$S$-conjugacy class of $\varphi_i\colon Q_i\to S$, each asterisk is indexed by a left coset of $Q_i$ in $S$.  

\begin{rmk} \label{R:fusion-corr-rule}
With the above notation in mind, suppose $R\leq S$, $\psi\in\Hom_\CF(R,S)$.  Then ${}_{R}X \cong {}_{\psi}X$, and by Lemma~\ref{T:res-biset},
\begin{align*}
	{}_{\psi}((S\times S)/\Delta_{Q_i}^{\varphi_i}) &\cong \coprod_{t\in [\psi(R)\backslash S/Q_i]} (R\times S)/\Delta_{\psi^{-1}({}^{t}Q_i)}^{\varphi_i\circ c_{t}^{-1}\circ \psi},\\
	{}_{R}((S\times S)/\Delta_{Q_i}^{\varphi_i}) &\cong \coprod_{s\in [R\backslash S/Q_i]} (R\times S)/\Delta_{{}^{s}Q_i}^{\varphi_i\circ c_{s}^{-1}}.
\end{align*}
So one can find a bijective correspondence $\mu$ between the above transitive subbisets of ${}_{R}X$ and ${}_{\psi}X$ and a permutation $\wt{\mu}\in\Sym(J)$ which respects $\mu$, and construct an isomorphism of $R$-$S$-bisets $g \colon {}_{R}X \xrightarrow{\sim} {}_{\psi}X$ which, viewed as an element of $G$, realizes $\psi$ and such that $\ol{g}=\wt{\mu}$.
\end{rmk}

Our focus is the behavior of the permutations $\ol{g}\in\Sym(J)$ associated with elements $g\in G$ realizing $\CF$-automorphisms of the $\CF$-essential subgroups of $S$.  First we consider $\CF$-automorphisms of $S$.

\begin{prp} \label{T:fusion permutations for Aut(S)}
Let $\CF$ be a saturated fusion system on a finite $p$-group $S$ with a left characteristic biset $X$.  Let $G=\Aut({}_{1}X)$ and use the notations introduced above.  Then the image of $\Aut_\CF(S)$ in $\Sym(J)$ is transitive on $J^{(0)} := \coprod \{ J_i \mid |J_i|=1 \}$.
\end{prp}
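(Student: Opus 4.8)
The plan is to realize $J^{(0)}$ as the fixed-point set $J^{\ol S}$ of $\ol S=\ol{\iota(S)}$ acting on $J$, and to identify the image of $\Aut_\CF(S)$ with $\ol N$, where $N=N_G(\iota(S))$. By the description of the $\ol S$-orbits $J_i$ (each of size $|S:Q_i|$), a point of $J$ is $\ol S$-fixed exactly when it lies in a block $J_i$ with $Q_i=S$, i.e.\ when the corresponding transitive subbiset is $(S\times S)/\Delta_S^\alpha$ with $\alpha\in\Aut_\CF(S)$; thus $J^{(0)}=J^{\ol S}$. Every $g\in N$ satisfies $\ol g\,\ol S\,\ol g^{-1}=\ol S$, so $\ol N$ permutes $J^{\ol S}=J^{(0)}$, and because $\CF\cong\CF_{\iota(S)}(G)$ the elements of $G$ realizing members of $\Aut_\CF(S)$ are precisely those of $N$. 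By Proposition~\ref{T:layer0}, $J^{(0)}$ partitions into $|\Out_\CF(S)|$ blocks, one for each class $[\alpha]\in\Out_\CF(S)$, each consisting of the $c_0$ orbits whose subbiset has isomorphism type $[\alpha]$. I will show that $\ol N$ is transitive on these blocks and that the stabilizer of a block already acts transitively on it; together these give transitivity on $J^{(0)}$.

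For the action on blocks, fix $\alpha\in\Aut_\CF(S)$. Left $\CF$-stability \eqref{stable} gives ${}_S X\cong{}_\alpha X$, and any such isomorphism, viewed as $g\in G$, realizes $\alpha$, i.e.\ $g(ux)=\alpha(u)g(x)$ for $u\in S$, $x\in X$. Take a type-$[\beta]$ orbit modelled on $(S\times S)/\Delta_S^\beta$ with distinguished point $p=\la 1,1\ra$, so $u\cdot p=p\,\beta(u)$, and write $g(p)=\la 1,w\ra$ inside a type-$[\gamma]$ orbit. Computing $g(u\cdot p)$ in the two ways $g(p\,\beta(u))=g(p)\beta(u)$ and $\alpha(u)g(p)$ and using freeness of the right action yields $\gamma\circ\alpha=c_w\circ\beta$, hence $[\gamma]=[\beta\alpha^{-1}]$ in $\Out_\CF(S)$. (The same conclusion follows from Remark~\ref{R:fusion-corr-rule} and Lemma~\ref{T:res-biset}: since $\alpha(S)=S$, each restricted piece ${}_\alpha((S\times S)/\Delta_S^\beta)$ stays transitive, of type $[\beta\alpha]$, and matching the isomorphic pieces of ${}_S X$ and ${}_\alpha X$ gives the same block bijection.) Thus $\ol g$ carries the block $[\beta]$ onto the block $[\beta\alpha^{-1}]$; as $\alpha$ ranges over $\Aut_\CF(S)$ this is the right regular action of $\Out_\CF(S)$ on its own classes, which is transitive.

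For transitivity inside a single block, I will produce elements of $C_G(\iota(S))\leq N$ that permute the $c_0$ isomorphic copies of $(S\times S)/\Delta_S^\beta$ making up the block of type $[\beta]$. Given two such copies $C,C'$, the canonical isomorphism of $S$-$S$-bisets between them, together with its inverse and extended by the identity on the rest of $X$, is a bijection of $X$ commuting with both the left and the right $S$-actions; hence it lies in $C_G(\iota(S))$, realizes $\id_S$, and its image in $\Sym(J)$ is the transposition $(C\,C')$. These transpositions generate the full symmetric group on the block, so the block stabilizer in $\ol N$ is transitive on the block. Combining this with transitivity on blocks gives the assertion.

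The delicate points are the block-permutation computation---keeping track of $S$-$S$-conjugacy so that the bookkeeping really takes place in $\Out_\CF(S)$ rather than in $\Aut_\CF(S)$, and fixing the direction of the map $[\beta]\mapsto[\beta\alpha^{-1}]$---and checking that the copy-permuting maps genuinely centralize $\iota(S)$; both are routine once set up, and it is the regular-action observation that makes block-transitivity immediate. No input beyond Proposition~\ref{T:layer0}, condition \eqref{stable}, Lemma~\ref{T:res-biset} and the realization statement $\CF\cong\CF_{\iota(S)}(G)$ is required.
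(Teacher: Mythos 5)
Your proof is correct and follows essentially the same route as the paper's: left $\CF$-stability produces an element $g\in G$ realizing each $\alpha\in\Aut_\CF(S)$, and tracking where the top-layer pieces $(S\times S)/\Delta_S^\beta$ go shows that $\ol{g}$ permutes the singleton orbits by translation by $[\alpha]$ on $\Out_\CF(S)$, whence transitivity. The one genuine addition is your centralizer argument swapping the $c_0$ isomorphic copies within a fixed class $[\beta]$; the paper's one-line proof tacitly treats each class as a single point of $J^{(0)}$, so this extra step is in fact needed for the statement as written when $c_0>1$, and it is a worthwhile refinement.
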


\begin{proof}
$J^{(0)}$ is the union of singleton left $\ol{S}$-orbits of $J$, each corresponding to an outer automorphism of $S$.  By Remark~\ref{R:fusion-corr-rule}, for any $\alpha\in\Aut_\CF(S)$ there is $g\in G$ which realizes $\alpha$ and such that $\ol{g}\in\Sym(J)$ sends the element of $J^{(0)}$ corresponding to $[\id_S]$ to the element of $J^{(0)}$ corresponding to $[\alpha]$ because 
\[
	{}_{\alpha}((S\times S)/\Delta_S^{\varphi_i}) \cong (S\times S)/\Delta_S^{\varphi_i\circ\alpha}
\]
for $Q_i=S$.
\end{proof}

Now we can prove Theorem~\ref{T:main2}, which we restate here for the convenience of the reader.

\begin{thm} \label{T:main2'}
Let $p$ be an odd prime and let $S$ be an extraspecial group of order $p^3$ and exponent $p$.  Let $\CF$ be a saturated fusion system on $S$ such that all subgroups of $S$ of order $p^2$ are $\CF$-radical.  Let $X$ be the minimal left characteristic biset $X$ for $\CF$ given by Theorem~\ref{T:main1}.  Let $G = \Aut({}_{1}X) \cong S \wr \Sigma_{e(X)}$ and let $H$ be the subgroup of $G$ generated by the normalizers of the $\CF$-essential subgroups of $S$.  Then the image of $H$ in $\Sigma_{e(X)}$ is a transitive subgroup of $\Sigma_{e(X)}$.
\end{thm}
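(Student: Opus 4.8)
The plan is to reduce the statement to a transitivity question on the transitive subbisets of $X$, and then to connect the three layers $X_0$, $X_1$, $X_2$ of Theorem~\ref{T:minimal biset for RV} to one another by permutations arising from $\CF$-automorphisms of the essential subgroups. Here the $\CF$-essential subgroups are exactly the $V_i$: each is $\CF$-centric with $\Aut_\CF(V_i)\ge\SL_2(p)$, and since $V_i$ is abelian, $\Out_\CF(V_i)=\Aut_\CF(V_i)$ contains a strongly $p$-embedded subgroup. The first observation is that, since each $V_i\unlhd S$, we have $\iota(S)\le N_G(\iota(V_i))\le H$, so $\ol S\le\ol H$. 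As recorded before the statement, each index set $J_i$ is a single $\ol S$-orbit; hence the $\ol H$-orbits on $J$ are unions of the $J_i$, and it suffices to prove that $\ol H$ permutes the transitive subbisets of $X$ transitively, i.e. acts transitively on the $X_0$-, $X_1$- and $X_2$-pieces of Theorem~\ref{T:minimal biset for RV}.

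The mechanism is the following. For an essential $V_i$ and $\psi\in\Aut_\CF(V_i)$, any $g\in G$ realizing $\psi$ lies in $N_G(\iota(V_i))\le H$, and its image $\ol g\in\Sym(J)=\Sigma_{e(X)}$ is computed by Remark~\ref{R:fusion-corr-rule}: one decomposes ${}_{V_i}X$ and ${}_\psi X$ into transitive $V_i$-$S$-bisets by Lemma~\ref{T:res-biset} and matches isomorphic pieces. I would record two cases. If $\psi$ is extendable, say $\psi=\gamma|_{V_i}$ with $\gamma\in\Aut_\CF(S)$ and $\gamma(V_i)=V_i$, then the $X_0$-piece $[\alpha]$ restricts to $(V_i\times S)/\Delta_{V_i}^{\alpha|_{V_i}}$ and is matched with the $\psi$-twist of a single other $X_0$-piece, so $\ol g$ acts on the $X_0$-pieces as right translation by $[\gamma]$ and preserves each layer. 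If $\psi$ is nonextendable (the $R_i$-part of $\Aut_\CF(V_i)$), then $\alpha\circ\psi\colon V_i\to\alpha(V_i)$ is nonextendable, so the $\psi$-twist of $[\alpha]$ is a nonextendable $V_i$-$S$-piece, which in ${}_{V_i}X$ can be matched only with a piece obtained by restricting an $X_1$-piece with source $V_i$ (recall the minimal $X_1$ consists only of nonextendable pieces). Thus such $g$ carry $X_0$-pieces to $X_1$-pieces sourced at $V_i$, and conversely.

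For the top layer, transitivity of $\ol H$ on $J^{(0)}$ (the $X_0$-pieces, indexed by $\Out_\CF(S)$) is supplied by Proposition~\ref{T:fusion permutations for Aut(S)} together with the fact that realizations of $\Aut_\CF(S)$ lie in $H$; I would note that even without the latter the same transitivity drops out of the $X_0\leftrightarrow X_1$ shuttle of the previous paragraph combined with $\ol S$-transitivity on each $X_1$-orbit, so the argument is robust. Granting $X_0$-transitivity, from any $X_0$-piece the nonextendable automorphisms of each $V_i$ reach the $X_1$-pieces sourced at $V_i$; as $\alpha\in\Aut_\CF(S)$ and $\psi\in\Aut_\CF(V_i)\cap R_i$ vary, the composites $\alpha\circ\psi$ exhaust all nonextendable isomorphisms $V_i\to V_j$ up to $S$-$S$-conjugacy, so every $X_1$-piece is hit. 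With $\ol S$ transitive on each orbit, this connects $J^{(0)}$ and $J^{(1)}$.

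The remaining and most delicate step is to connect the bottom layer $J^{(2)}$. Given an $X_2$-piece $(S\times S)/\Delta_{\la u_a\ra}^{\theta}$ with $\theta\colon\la u_a\ra\to\la u_b\ra$, I would restrict it to $V_a$ and twist by a nonextendable $\psi\in\Aut_\CF(V_a)$; by Lemma~\ref{T:res-biset} the trivial double coset in $\psi(V_a)\backslash S/\la u_a\ra$ contributes a transitive piece with source $\psi^{-1}(\la u_a\ra)=\la z\ra$ and target $\la u_b\ra$. Such a $\la z\ra$-sourced, $\la u_b\ra$-valued piece occurs in ${}_{V_a}X$ only by restricting an $X_1$-piece $[\varphi\colon V_c\to V_b]$ with $c\ne a$ to $V_a$ (whose source is then $V_c\cap V_a=\la z\ra$), since in the minimal biset the coefficients $c_2^{(z,u_j^m)}$ and $c_2^{(u_i,z^m)}$ vanish by Theorem~\ref{T:minimal biset for RV}. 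Hence $\ol g$ links this $X_2$-orbit to an $X_1$-orbit. \textbf{The main obstacle} is precisely this step: one must track the non-normal order-$p$ sources $\la u_a\ra$ and the subgroups $\psi^{-1}({}^t\la u_a\ra)\le V_a$ through all double cosets, match the resulting transitive pieces against those of ${}_{V_a}X$ using the vanishing of the two families of $X_2$-coefficients, and check that every $X_2$-orbit (for each $\CF$-class of the $V_i$) is reached. Once this is established, the three layers are pairwise connected under $\ol H$, and together with $\ol S$-transitivity on each $J_i$ we conclude that $\ol H$ is transitive on $J$, i.e. the image of $H$ in $\Sigma_{e(X)}$ is transitive.
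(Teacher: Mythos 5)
Your proposal is correct and follows essentially the same route as the paper: reduce to transitivity of $\ol{H}$ on the transitive subbisets, use Proposition~\ref{T:fusion permutations for Aut(S)} for the top layer, and realize a nonextendable $\varphi\in\Aut_\CF(V_i)$ by an element of $N_G(V_i)$ whose induced permutation shuttles $X_0$-pieces into $X_1$-pieces sourced at $V_i$ and, via the $\la z\ra$-sourced pieces in the double-coset decomposition, shuttles $X_1$-pieces sourced at $V_c$ ($c\neq i$) into $X_2$-pieces sourced at $\la u_i\ra$. The bookkeeping you flag as the main obstacle is exactly the table computed in the paper's proof, and it resolves precisely as you predict, using the vanishing of $c_2^{(z,u_j^m)}$ and $c_2^{(u_i,z^m)}$ and the multiplicity match from Remark~\ref{R:numerical rel}.
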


\begin{proof}
We keep the notations of this section and \S\ref{S:minimal biset for RV}.  We have
\[
	H = \la \Aut_\CF(S), \Aut_\CF(V_i) \mid 1\leq i\leq n \ra.
\]
The index set $J$ of the symmetric group $\Sigma_{e(X)}$ can be represented schematically as follows: 
\begin{align*}
	\overbrace{\ast\ast\ast\ast\ast\ast\ast\,\ast}^{[\alpha]}
\overbrace{(\ast\ast\ast)\cdots(\ast\ast\ast)}^{[\varphi_{0,j}^{k,l}]} \cdots \overbrace{(\ast\ast\ast)\cdots(\ast\ast\ast)}^{[\varphi_{p,j}^{k,l}]}
\overbrace{(\ast\ast\ast\ast\ast\ast\ast\ast\ast)\cdots(\ast\ast\ast\ast\ast\ast\ast\ast\ast)}^{[u_0\to u_j^m]}\cdots
\end{align*}
By Proposition~\ref{T:fusion permutations for Aut(S)}, we know that the image of $\Aut_\CF(S)$ in $\Sym(J)$ is transitive on the union of singleton left $\ol{S}$-orbits of $J$ corresponding to outer automorphisms of $S$. 

Now let $\varphi=\varphi_{0,0}^{r,s}\in\Aut_\CF(V_0)$ for some $(r,s)\in\Lambda_0^n$.  Recall that $Z(S)=\la z \ra$, $V_i=\la z,u_i \ra$ for $0\leq i\leq p$. Let $v\in S$ such that $[v,u_0]=z$ so that $v^tu_0v^{-t} = z^tu_0$ for all $t$.  Then we have sets of left coset representatives as follows:
\begin{gather*}
	[S/V_0]=\{v^t\}_{0\leq t\leq p-1},\quad [S/V_i]=\{u_0^t\}_{0\leq t\leq p-1} (i\neq 0),\\
	[S/\la u_0\ra]=\{ v^tz^w \}_{0\leq t,w\leq p-1},\quad [S/\la u_i\ra]=\{ u_0^tz^w\}_{0\leq t,w\leq p-1} (i\neq 0).
\end{gather*}
We have ${}_{V_0}X\cong{}_{\varphi}X$ as $V_0$-$S$-bisets, and for each transitive subbiset of $X$, the two restrictions to $V_0$ via the inclusion $V_0 \inj S$ and via $\varphi$ are given as follows:
\begin{center}
\renewcommand{\arraystretch}{1.5}
\begin{tabular}{|c|c|c|}
\hline
$Y$ & ${}_{V_0}Y$ & ${}_{\varphi}Y$ \\
\hline
$(S\times S)/\Delta^{\alpha}_{S}$ & 
$(V_0\times S)/\Delta_{V_0}^{\alpha|_{V_0}}$ & 
$(V_0\times S)/\Delta_{V_0}^{\alpha\circ\varphi}$ \\
\hline
$(S\times S)/\Delta^{\varphi_{0,j}^{k,l}}_{V_0}$ & 
$\coprod_{t=0}^{p-1} (V_0\times S)/\Delta_{V_0}^{\varphi_{0,j}^{k,l}\circ c_{v^{t}}}$ &
$\coprod_{t=0}^{p-1} (V_0\times S)/\Delta_{V_0}^{\varphi_{0,j}^{k,l}\circ c_{v^{t}}\circ\varphi}$ \\
\hline
$(S\times S)/\Delta^{\varphi_{i,j}^{k,l}}_{V_i} (i\neq 0)$ &
$(V_0\times S)/\Delta_{\la z \ra}^{u_j^{rk}}$ & 
$(V_0\times S)/\Delta_{\la u_0 \ra}^{u_j^{rl}}$ \\
\hline
$(S\times S)/\Delta^{u_j^m}_{\la u_0 \ra}$ & 
$\coprod_{t=0}^{p-1} (V_0\times S)/\Delta_{\la z^{t}u_0\ra}^{u_j^m}$ & 
$\coprod_{t=0}^{p-1} (V_0\times S)/\Delta_{\la z^{r^{-1}}u_0^{s^{-1}t}\ra}^{u_j^m}$ \\
\hline
$(S\times S)/\Delta^{u_j^m}_{\la u_i \ra} (i\neq 0)$ & 
$V_0\times S$ & 
$V_0\times S$\\
\hline
\end{tabular}
\end{center}
By comparing the size, we see that the transitive $V_0$-$S$-subbisets in the first two rows of the above table are isomorphic to each other, and those in the third and fourth rows are isomorphic to each other.  Among the transitive subbisets in the first two rows, we can determine isomorphic pairs by the extendability of the associated maps.  In the column ${}_{V_0}Y$, the maps $\alpha|_{V_0}$ are extendable, while $\varphi_{0,j}^{k,l}\circ c_{v^{t}}$ are nonextendable.  On the other hand, in the column ${}_{\varphi}Y$, the maps $\alpha\circ\varphi$ are nonextendable, while the maps $\varphi_{0,j}^{k,l}\circ c_{v^{t}}\circ\varphi$ are extendable if $t=0$ and nonextentable if $t\neq 0$.  Thus
\begin{align*}
	(V_0\times S)/\Delta_{V_0}^{\alpha|_{V_0}} &\cong (V_0\times S)/\Delta_{V_0}^{\varphi_{0,j}^{k,l}\circ\varphi},\\
	(V_0\times S)/\Delta_{V_0}^{\alpha\circ\varphi} &\cong (V_0\times S)/\Delta_{V_0}^{\varphi_{0,j}^{k,l}},\\
	(V_0\times S)/\Delta_{V_0}^{\varphi_{0,j}^{k,l}\circ c_{v^{t}}} &\cong (V_0\times S)/\Delta_{V_0}^{\varphi_{0,j}^{k,l}\circ c_{v^{t'}}\circ\varphi}\quad(t,t'\neq 0).
\end{align*}
Similarly,
\begin{align*}
	(V_0\times S)/\Delta_{\la z \ra}^{u_j^{rk}} &\cong (V_0\times S)/\Delta_{\la z^{r^{-1}}\ra}^{u_j^m},\\
	(V_0\times S)/\Delta_{\la u_0 \ra}^{u_j^{m}} &\cong (V_0\times S)/\Delta_{\la u_0\ra}^{u_j^{rl}},\\
	(V_0\times S)/\Delta_{\la z^{t}u_0\ra}^{u_j^m} &\cong (V_0\times S)/\Delta_{\la z^{r^{-1}}u_0^{s^{-1}t'}\ra}^{u_j^m}\quad(t,t'\neq 0).
\end{align*}
Note that the numerical relations observed in Remark~\ref{R:numerical rel} guarantees that the numbers of transitive $V_0$-$S$-subbisets in the above list of isomorphisms do match up.  This shows that for each $\varphi \in \Aut_\CF(V_0)$, there is an element $g\in N_G(V_0)$ which realizes $\varphi$ and such that $\ol{g}\in \Sym(J)$ respects the above list of isomorphisms of transitive $V_0$-$S$-bisets.  Schematically, $\ol{g}$ can be represented as follows:
\begin{align*}
	\overbrace{\bullet\bullet\bullet\bullet\bullet\bullet\bullet\,\bullet}^{[\alpha]}
\overbrace{(\circ\ast\ast)\cdots(\circ\ast\ast)}^{[\varphi_{0,j}^{k,l}]} |
\cdots \overbrace{(\bullet\bullet\bullet)\cdots(\bullet\bullet\bullet)}^{[\varphi_{p,j}^{k,l}]}
\overbrace{(\circ\circ\circ\ast\ast\ast\ast\ast\ast)\cdots(\circ\circ\circ\ast\ast\ast\ast\ast\ast)}^{[u_0\to u_j^m]} |\cdots \\
	\overbrace{\circ\circ\circ\circ\circ\circ\circ\,\circ}^{[\alpha]}
\overbrace{(\bullet\ast\ast)\cdots(\bullet\ast\ast)}^{[\varphi_{0,j}^{k,l}]} |
\cdots \overbrace{(\circ\circ\circ)\cdots(\circ\circ\circ)}^{[\varphi_{p,j}^{k,l}]}
\overbrace{(\bullet\bullet\bullet\ast\ast\ast\ast\ast\ast)\cdots(\bullet\bullet\bullet\ast\ast\ast\ast\ast\ast)}^{[u_0\to u_j^m]} |\cdots
\end{align*}
where $\ol{g}$ sends solid dots, circles and asterisks in the first row to solid dots, circles and asterisks in the second row, respectively, inside each compartment divided by vertical lines.  In particular, the images of $\Aut_\CF(V_i) (1\leq i\leq n)$ in $\Sym(J)$ merge left $\ol{S}$-orbits of $J$ of different sizes into a single $\ol{H}$-orbit.  Together with the transitivity of the image of $\Aut_\CF(S)$ in $\Sym(J)$ on the singleton left $S$-orbits, this shows that $\ol{H}$ is transitive on the whole index set $J$.
\end{proof}

\appendix
\section{Coefficients of characteristic idempotents} \label{S:coeff char id}

In this section, we determine some coefficients of characteristic idempotents.  The reason why we include this section in the paper is two-fold.  First, as characteristic idempotents are determined by imposing only one additional idempotency condition \cite[5.5, 5.6]{Ragnarsson2006} on virtual characteristic bisets with coefficients in $\BZ_{(p)}$, this can be done without much effort using Proposition~\ref{T:layer1} and Theorem~\ref{T:minimal biset for RV}.  More importantly, the information contained in this section can be useful for testing conjectures on the behavior of characteristic idempotents.

We follow Ragnarsson's notation \cite{Ragnarsson2006,RagnarssonStancuSFSasIdemp} in this section.  Let $\F$  be a saturated fusion system on a finite $p$-group $S$. Let $\omega = \omega_\F$ be the characteristic idempotent of $\F$.  As for characteristic bisets, let
\[
	\omega = \omega_0 + \omega_1 + \cdots,
\]
where
\[
	\omega_r = \sum_{\substack{P\leq S \\ |S:P|=p^r}}\sum_{[P,\varphi]} c_{[P,\varphi]}(\omega) [P,\varphi]^S_S.
\]
($c_{[P,\varphi]}(\omega)$ denotes the coefficient of the basis element $[P,\varphi]$ for $\omega$.)  Then the idempotency condition tells us that
\[
	\sum_{[\varphi] \in \Hom_\CF(P,S)/\sim_{(S,S)}} c_{[P,\varphi]} = 
	\begin{cases}
		1,	&\text{if $P=S$}\\
		0,	&\text{if $P<S$}	
	\end{cases}
\]
Using this, it is easy to see that
\[
	\omega_0 = c_0 \sum_{[\alpha]\in\Out_\F(S)}[S,\alpha]^S_S,
\]
where $c_0 = \frac{1}{|\Out_\F(S)|}$.

\begin{prp} \label{T:layer1-omega}
Let $\CF$ be a saturated fusion system on a finite $p$-group $S$.  Let $\{ P_i \mid 1\leq i \leq n \}$ be the set of subgroups of $S$ of index $p$.  For each $1\leq i\leq n$, let $\{ \varphi_{i,j}\colon 
P_i \to S \mid 0\leq j\leq m_i \}$ be a set of representatives of $\sim_{(S,S)}$-equivalence classes of $\CF$-morphisms with domain $P_i$ such that $\varphi_{i,0}=\iota_{P_i}$, the inclusion map $P_i\hookrightarrow S$.  Let
\begin{gather*}
	d^{(e)}_i = | \{ 1\leq j\leq m_i \mid \text{$\varphi_{i,j}$ is extendable} \} |,\\
	d^{(n)}_i = | \{ 1\leq j\leq m_i \mid \text{$\varphi_{i,j}$ is nonextendable} \} |.
\end{gather*}
Then
\[
	\omega_1 = \sum_{\substack{ i, j \text{ s.t.} \\ \varphi_{i,j} \text{ extendable}}} c^{(e)}_i [P_i,\varphi_{i,j}]^S_S + \sum_{\substack{ i, j \text{ s.t.} \\ \varphi_{i,j} \text{ nonextendable}}} c^{(n)}_i [P_i,\varphi_{i,j}]^S_S,
\]
where
\[
	c^{(e)}_i = -\frac{d^{(n)}_i}{d^{(e)}_i+pd^{(n)}_i}c_0,\quad c^{(n)}_i = \frac{d^{(e)}_i}{d^{(e)}_i+pd^{(n)}_i}c_0.
\]
\end{prp}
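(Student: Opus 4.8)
The plan is to determine $\omega_1$ by the same two-step strategy that produced Proposition~\ref{T:layer1}: first use $\CF$-stability to collapse the unknown coefficients down to a single parameter for each index-$p$ subgroup, and then use idempotency to pin that parameter down.

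First I would record that the characteristic idempotent $\omega$ is a \emph{virtual} characteristic biset with coefficients in $\BZ_{(p)}$, so in particular it is left and right $\CF$-stable, and that stability is expressed through the mark (fixed-point) homomorphism, which is $\BZ_{(p)}$-linear. Consequently every ingredient of the proof of Proposition~\ref{T:layer1} transfers verbatim to $\omega$: the shape of the top layer $\omega_0 = c_0\sum_{[\alpha]\in\Out_\CF(S)}[S,\alpha]^S_S$ with $c_0 = 1/|\Out_\CF(S)|$, the fact that for $|S:P_i|=p$ only $\omega_0$ and $\omega_1$ contribute to the mark at $\Delta_{P_i}^{\varphi}$ (Lemma~\ref{T:fixed points lemma}), and the fixed-point formulas of Lemmas~\ref{T:no of fixed pts} and \ref{T:no of fixed pts on top piece}. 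The only change is that the coefficients are now allowed to be elements of $\BZ_{(p)}$ rather than nonnegative integers. This yields that the coefficient $c_{[P_i,\varphi_{i,j}]}(\omega)$ takes one common value $c_i^{(e)}$ on all extendable $\varphi_{i,j}$ and another common value $c_i^{(n)}$ on all nonextendable $\varphi_{i,j}$, tied together by the single relation
\[
	c_i^{(n)} = c_0 + p\,c_i^{(e)}.
\]

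Next I would impose idempotency. For $P_i < S$ the idempotency condition reads $\sum_{[\varphi]} c_{[P_i,\varphi]}(\omega) = 0$, the sum ranging over all $\sim_{(S,S)}$-classes in $\Hom_\CF(P_i,S)$. Splitting this sum according to extendability, and using that the extendable classes are in bijection with $\Out_\CF(S)$ by Proposition~\ref{restriction isom} (the inclusion $\iota_{P_i}$ corresponding to $[\id_S]$) so that their number is $d_i^{(e)}$, while the nonextendable ones number $d_i^{(n)}$, I obtain the linear equation
\[
	d_i^{(e)}\,c_i^{(e)} + d_i^{(n)}\,c_i^{(n)} = 0.
\]

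Finally I would solve the resulting $2\times 2$ system. Substituting $c_i^{(n)} = c_0 + p\,c_i^{(e)}$ into the idempotency equation gives $(d_i^{(e)}+p\,d_i^{(n)})\,c_i^{(e)} = -\,d_i^{(n)}c_0$, which, together with the relation, yields the stated formulas for $c_i^{(e)}$ and $c_i^{(n)}$. As a sanity check one notes that $d_i^{(e)} = |\Out_\CF(S)|$ is prime to $p$ (saturation makes $\Inn(S)$ a Sylow $p$-subgroup of $\Aut_\CF(S)$), so the denominator satisfies $d_i^{(e)}+p\,d_i^{(n)}\equiv d_i^{(e)}\not\equiv 0 \pmod p$, confirming that the coefficients lie in $\BZ_{(p)}$ as required. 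The step to get right is the first one: one must justify that the biset computation of Proposition~\ref{T:layer1}, originally phrased in terms of honest fixed-point counts, carries over to the virtual, $\BZ_{(p)}$-coefficient idempotent. Once the marks are recognized as a $\BZ_{(p)}$-linear invariant and $\omega$ is regarded as a virtual characteristic biset, this is routine, and everything after it is elementary linear algebra.
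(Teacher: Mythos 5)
Your proof is correct and follows essentially the same route as the paper's: the paper likewise invokes Proposition~\ref{T:layer1} (read with coefficients in $\BZ_{(p)}$) to reduce to the two unknowns tied by $c_i^{(n)} = c_0 + p c_i^{(e)}$, imposes the idempotency equation $d_i^{(e)}c_i^{(e)} + d_i^{(n)}c_i^{(n)} = 0$, and solves the resulting $2\times 2$ system. Your explicit justification that the fixed-point/stability argument of Proposition~\ref{T:layer1} transfers to the virtual, $\BZ_{(p)}$-linear setting, and your check that $d_i^{(e)}\not\equiv 0 \pmod p$ so the coefficients lie in $\BZ_{(p)}$, are points the paper leaves implicit.
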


\begin{proof}
By Proposition~\ref{T:layer1}, we have
\[
	\omega_1 = \sum_{\substack{ i, j \text{ s.t.} \\ \varphi_{i,j} \text{ extendable}}} c_i^{(e)} [P_i,\varphi_{i,j}] +
		\sum_{\substack{ i, j \text{ s.t.} \\ \varphi_{i,j} \text{ nonextendable}}} c_i^{(n)} [P_i,\varphi_{i,j}]
\]
where $c_i^{(e)}, c_i^{(n)} \in \BZ_{(p)}$ such that
\[
	c_i^{(n)} = c_0 + pc_i^{(e)}.
\]
The coefficients of $\omega_1$ are determined by the idempotency condition:
\[
	d_i^{(e)}c_i^{(e)} + d_i^{(n)}c_i^{(n)} = 0\qquad (1\leq i\leq n).
\]
Solving these equations for $c_i^{(e)}$, we get
\[
	c_i^{(e)} = -\frac{d^{(n)}_i}{d^{(e)}_i+pd^{(n)}_i}c_0.
\]
Then
\[
	c^{(n)}_i = \frac{d^{(e)}_i}{d^{(e)}_i+pd^{(n)}_i}c_0.\qedhere
\]
\end{proof}

For the next proposition, we use the notations of \S\ref{S:minimal biset for RV}.

\begin{prp}
Let $p$ be an odd prime and let $S$ be an extraspecial group of order $p^3$ and exponent $p$.  Let $\CF$ be a saturated fusion system on $S$ such that all subgroups of $S$ of order $p^2$ are $\CF$-radical. Then
\begin{align*}
	\omega_1 =&  -\frac{c_0}{1+p} \sum_{\substack{V_i\cong_\CF V_j\\ (k,l)\in\Lambda^{e}_i}} [V_i,\psi_{i,j}^{k,l}]^S_S + \frac{c_0}{1+p} \sum_{\substack{V_i\cong_\CF V_j\\ (k,l)\in\Lambda^{n}_i}} [V_i,\varphi_{i,j}^{k,l}]^S_S,\\
	\omega_2 =& \frac{p}{p^3-1}\sum_{1\leq m\leq p-1}[z,z^m] 
	-\frac{p}{(p+1)(p^3-1)} \sum_{\substack{0\leq i\leq p\\ 1\leq m\leq p-1}}[u_i,z^m]\\ 
	&-\frac{p}{(p+1)(p^3-1)} \sum_{\substack{0\leq j\leq p\\ 1\leq m\leq p-1}}[z^m,u_j^m]
	+ \sum_{0\leq i\leq p} \left(\frac{1}{p^3-1} - \frac{r_ic_0}{p+1}\right)\sum_{\substack{V_i\cong_\CF V_j\\ 1\leq m\leq p-1}}[u_i,u_j^m]\\
	&+ \frac{1}{p^3-1} \sum_{0\leq i\leq p} \sum_{\substack{V_i\not\cong_\CF V_j\\ 1\leq m\leq p-1}}[u_i,u_j^m].
\end{align*}
where $[\xi,\zeta] := [\la \xi \ra,\varphi]$ with $\varphi \colon \la \xi\ra \to \la\zeta\ra$ such that $\varphi(\xi)=\zeta$.
\end{prp}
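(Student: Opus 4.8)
The plan is to treat $\omega$ as a virtual characteristic biset with coefficients in $\BZ_{(p)}$, so that the left and right stability conditions impose exactly the same linear relations on its layer coefficients as in the genuine biset case (Theorem~\ref{T:minimal biset for RV}), while the single idempotency condition $\sum_{[\varphi]} c_{[P,\varphi]} = 0$ (for $P<S$) recorded above fixes the remaining freedom.

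For $\omega_1$ I would apply the idempotency condition directly to each index-$p$ subgroup $V_i$. Stability forces a common coefficient $c_i^{(e)}$ on all extendable classes out of $V_i$ and the value $c_i^{(n)} = c_0 + p c_i^{(e)}$ on all nonextendable ones. The extendable classes are the $\psi_{i,j}^{k,l}$ with $V_i \cong_\CF V_j$ and $(k,l)\in\Lambda_i^e$, the nonextendable ones the $\varphi_{i,j}^{k,l}$ with $(k,l)\in\Lambda_i^n$; using $f = |V_i^\CF| r_i$ these two families are equinumerous, each of size $|V_i^\CF|(p-1)r_i = (p-1)f$. Hence the sum-to-zero condition reads $(p-1)f\,(c_i^{(e)} + c_i^{(n)}) = 0$, so $c_i^{(n)} = -c_i^{(e)}$; combined with $c_i^{(n)} = c_0 + p c_i^{(e)}$ this yields $c_i^{(e)} = -c_0/(1+p)$ and $c_i^{(n)} = c_0/(1+p)$, as claimed. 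This is just Proposition~\ref{T:layer1-omega} specialized to the present situation.

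For $\omega_2$ I would proceed in two stages. First, since stability of $\omega$ is governed by the same fixed-point equations as in the proof of Theorem~\ref{T:minimal biset for RV}, I may reuse the relations found there verbatim, expressing every layer-$2$ coefficient $c_2^{(\xi,\zeta)}$ in terms of $c_0$, the $c_1^{(i)}$, and two free families $c_2^{(z)}$ and $c_2^{(u_i)}$. Substituting $c_0 = 1/|\Out_\CF(S)| = 1/((p-1)f)$ and $c_1^{(i)} = -c_0/(1+p)$, and simplifying with $f = |V_i^\CF| r_i$, the $j$-dependence drops out: one finds $c_2^{(z,u_j^m)} = p c_2^{(z)} - fpc_0/(1+p)$, and $c_2^{(u_i,u_j^m)}$ equal to $c_2^{(u_i)}$ when $V_i \cong_\CF V_j$ and to $r_i c_0/(1+p) + c_2^{(u_i)}$ otherwise. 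Second, I would impose the sum-to-zero condition on each order-$p$ subgroup. There are exactly two $S$-conjugacy types: the central $\langle z\rangle$ and the noncentral subgroups, which split into the $p+1$ classes represented by $\langle u_i\rangle$ $(0\le i\le p)$. Summing the coefficients of all $\CF$-morphisms out of $\langle z\rangle$ gives a single equation, solved by $c_2^{(z)} = p/(p^3-1)$; summing those out of $\langle u_i\rangle$ gives, for each $i$, an equation solved by $c_2^{(u_i)} = 1/(p^3-1) - r_i c_0/(1+p)$. Back-substitution into the stability relations then produces all the displayed coefficients, with the identity $(p-1)(p^2+p+1) = p^3-1$ carrying out the final simplifications.

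The main obstacle will be the bookkeeping in the idempotency step: one must enumerate, up to $S$-$S$-conjugacy, all $\CF$-morphisms issuing from each order-$p$ subgroup and count how many basis elements of each type enter each sum. The points to verify are that $\langle z\rangle$ has the $p-1$ central targets $z^m$ together with $(p+1)(p-1)$ noncentral targets $u_j^m$ --- matching the $p^2-1$ conjugacy classes of noncentral order-$p$ elements of $S$ --- and that the noncentral order-$p$ subgroups genuinely fall into the $p+1$ classes $\langle u_i\rangle$, so that the idempotency condition supplies exactly one equation per free parameter. Once the summation ranges are correct, the remainder is routine linear algebra.
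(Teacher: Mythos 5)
Your proposal is correct and follows essentially the same route as the paper: $\omega_1$ is obtained by specializing Proposition~\ref{T:layer1-omega} (with $d_i^{(e)}=d_i^{(n)}=(p-1)|V_i^\CF|r_i=1/c_0$ from Lemma~\ref{T:f-number}), and $\omega_2$ by reusing the stability relations from the proof of Theorem~\ref{T:minimal biset for RV} and then imposing the sum-to-zero idempotency condition on $\langle z\rangle$ and on each $\langle u_i\rangle$. The intermediate simplifications you record (e.g.\ $c_2^{(z,u_j^m)}=pc_2^{(z)}-fpc_0/(1+p)$ and the extra term $r_ic_0/(1+p)$ in the non-$\CF$-conjugate case) agree with the paper's computation.
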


\begin{proof}
By Proposition~\ref{T:layer1-omega}, 
\[
	\omega_1 = \sum_{\substack{V_i\cong_\CF V_j\\ (k,l)\in\Lambda^{e}_i}} c^{(e)}_i [V_i,\psi_{i,j}^{k,l}]^S_S + \sum_{\substack{V_i\cong_\CF V_j\\ (k,l)\in\Lambda^{n}_i}} c^{(n)}_i [V_i,\varphi_{i,j}^{k,l}]^S_S,
\]
where
\[
	c^{(e)}_i = -\frac{d^{(n)}_i}{d^{(e)}_i+pd^{(n)}_i}c_0,\quad c^{(n)}_i = \frac{d^{(e)}_i}{d^{(e)}_i+pd^{(n)}_i}c_0.
\]
So we only need to compute $d_i^{(e)}$, $d_i^{(e)}$ to determine $\omega_1$.  By Lemma~\ref{T:f-number}, we have
\begin{align*}
	d_i^{(e)} &= |\{ \psi_{i,j}^{k,l} \mid V_i\cong_\CF V_j, (k,l) \in \Lambda_i^e \}| = |V_i^\CF|(p-1)r_i = \frac{1}{c_0},\\
	d_i^{(n)} &= |\{ \varphi_{i,j}^{k,l} \mid V_i\cong_\CF V_j, (k,l) \in \Lambda_i^n \}| = |V_i^\CF|(p-1)r_i = \frac{1}{c_0}.
\end{align*}
So
\[
	c_i^{(e)} = -\frac{c_0}{1+p} =: c_1,\qquad c_i^{(n)} = \frac{c_0}{1+p}.
\]

Now we determine $\omega_2$.  By the proof of Theorem~\ref{T:minimal biset for RV},
\begin{align*}
	\omega_2 &= \sum_{1\leq m\leq p-1}c_2^{(z,z^m)}[z,z^m] 
	+ \sum_{\substack{0\leq i\leq p\\ 1\leq m\leq p-1}}c_2^{(u_i,z^m)}[u_i,z^m]\\ 
	&+ \sum_{\substack{0\leq j\leq p\\ 1\leq m\leq p-1}}c_2^{(z,u_j^m)}[z,u_j^m]
	+ \sum_{\substack{0\leq i, j\leq p\\ 1\leq m\leq p-1}}c_2^{(u_i,u_j^m)}[u_i,u_j^m],
\end{align*}
where 
\begin{align*}
	c_2^{(z,z^m)} &= c_2^{(z,z)} =: c_2^{(z)},\\
	c_2^{(u_i,u_j^m)} &=
		\begin{cases}
			c_2^{(u_i,u_i)} =: c_2^{(u_i)},			&\text{if $V_i\cong_\CF V_j$}\\
			c_2^{(u_i)} + r_ic_0+pr_ic_1, 			&\text{if $V_i\not\cong_\CF V_j$},
		\end{cases}\\
	c_2^{(z,u_j^m)} &= pc_2^{(z)}+(f-r_j)\sum_{V_i\cong_\CF V_j}c_1+f\sum_{V_i\not\cong_\CF V_j}c_1,\\
	c_2^{(u_i,z^m)} &= \frac{1}{p}c_2^{(u_i)}-\frac{f-r_i}{p}c_0-(f-
r_i)c_1.
\end{align*}	
The coefficients of $\omega_2$ are determined by the idempotency condition:
\begin{align*}
	0 &= \sum_{1\leq m\leq p-1} c_2^{(z,z^m)} + \sum_{\substack{0\leq j\leq p\\ 1\leq m\leq p-1}} c_2^{(z,u_j^m)},\\
	0 &= \sum_{1\leq m\leq p-1}c_2^{(u_i,z^m)} + \sum_{\substack{0\leq j\leq p\\ 1\leq m\leq p-1}}c_2^{(u_i,u_j^m)} \qquad (0\leq i\leq p).
\end{align*}
Solving these equations, we get
\[
	c_2^{(z)} = \frac{p}{p^3-1},\qquad	c_2^{(u_i)} = \frac{1}{p^3-1} - \frac{r_i}{p+1}c_0.
\]
Then
\begin{align*}
	c_2^{(u_i,u_j^m)} &= \frac{1}{p^3-1}\qquad (\text{if $V_i\not\cong_\CF V_j$}),\\
	c_2^{(z,u_j^m)} &= c_2^{(u_i,z^m)} = -\frac{p}{(p+1)(p^3-1)}.\qedhere
\end{align*}
\end{proof}

\end{document}